\newcommand{%
  
  \import{./figs/}{.pdf_tex}
}[1]{%
  
  \import{./figs/}{#1.pdf_tex}
}
\newcommand{\tikzfig}[1]{%
  \resizebox{\columnwidth}{!}{\includestandalone{figs/#1/#1}}
}
\tikzstyle{state}=[
\definecolor{bubbles}{rgb}{0.91, 1.0, 1.0}
\definecolor{aquamarine}{rgb}{0.5, 1.0, 0.83}
\definecolor{bubblegum}{rgb}{0.99, 0.76, 0.8}
\definecolor{bluebell}{rgb}{0.64, 0.64, 0.82}
\definecolor{dollarbill}{rgb}{0.72, 0.93, 0.6}
\newtheorem{theorem}{Theorem}
\newtheorem{proposition}{Proposition}
\newtheorem{definition}{Definition}
\newtheorem{remark}[theorem]{Remark}
\newtheorem{lemma}[theorem]{Lemma}
\newcommand{\figref}[1]{Fig.~\ref{#1}}
\newcommand{\tabref}[1]{Table~\ref{#1}}
\newcommand{\thmref}[1]{Theorem~\ref{#1}}
\newcommand{\lemref}[1]{Lemma~\ref{#1}}
\newcommand{\secref}[1]{Section~\ref{#1}}
\newcommand{\algoref}[1]{Algorithm~\ref{#1}}
\NewDocumentCommand{\textcite}{om}{\IfValueTF{#1}{\cite[#1]{#2}} {\cite{#2}}}
\NewDocumentCommand{\parencite}{om}{\IfValueTF{#1}{\cite[#1]{#2}} {\cite{#2}}}
\newcommand{\Z}{\mathbb Z}
\newcommand{\N}{\mathbb N}
\newcommand{\Q}{\mathbb Q}
\newcommand{\Qbar}{\bar{\mathbb Q}}
\newcommand{\C}{\mathbb C}
\newcommand{\Proj}{\mathbb P}
\newcommand{\SL}{\operatorname{SL}}
\newcommand{\GL}{\operatorname{GL}}
\newcommand{\im}{ \operatorname{im}}
\newcommand{\res}{\operatorname{Res}}
\newcommand{\NS}{\operatorname{NS}}
\newcommand{\MW}{\operatorname{MW}}
\newcommand{\Triv}{\operatorname{Triv}}
\newcommand{\Prim}{\operatorname{Prim}}
\newcommand{\T}{\mathcal{T}}
\newcommand{\ud}{\mathrm{d}}
\def\ps@pprintTitle{%
  \let\@oddhead\@empty
  \let\@evenhead\@empty
  \def\@oddfoot{\reset@font\hfil\thepage\hfil}
  \let\@evenfoot\@oddfoot
}
\sffamily\color{gray}\arabic*,
\quad\ctfont{[},
\title{A semi-numerical algorithm for\\ the homology lattice and periods of complex elliptic surfaces over $\Proj^1$\tnoteref{t1} }
\author{Eric Pichon-Pharabod }
\date{\today}
\begin{document}

\begin{abstract}
  We provide an algorithm for computing a basis of homology of elliptic surfaces over $\mathbb P_\C^1$ that is sufficiently explicit for integration of periods to be carried out.
  This allows the heuristic recovery of several algebraic invariants of the surface, notably the Néron--Severi lattice, the transcendental lattice, the Mordell--Weil group and the Mordell--Weil lattice.
  This algorithm comes with a SageMath implementation.
\end{abstract}

\maketitle


\setcounter{tocdepth}{1}
\tableofcontents

\section{Introduction}

An elliptic surface is a surface $S$ together with a proper map $f\colon S\to C$ to a curve, such that the generic fibre is an elliptic curve.
Such surfaces benefit from a rich structure, bridging the elegance of elliptic curves with properties of higher-dimensional varieties.
As such, they exhibit numerous algebraic and topological phenomena.

The set of rational points of an elliptic curve benefits from a group structure, called the Mordell--Weil group.
This group's structure and properties have far-reaching implications in various fields, such as algebraic number theory with the Birch and Swinnerton--Dyer conjecture \parencite{BirchSwinnerton-Dyer_1965}; cryptography with elliptic curve cryptography \parencite{Miller1986, Koblitz1987}; or the study of Diophantine equations \parencite{Lang1978}.
When the elliptic curve is defined over a number field, the Mordell--Weil theorem, a central result in this context, establishes that this group is finitely generated \parencite{Mordell1922, Weil1928}.
Determining the Mordell--Weil group, and in particular determining its rank, however, remains a notoriously difficult problem \parencite{hindry2007}.

Viewing elliptic surfaces as elliptic curves over the function field of the base space reveals a map between rational points of the curve and sections of the fibration.
In this context the Mordell--Weil group gains additional structure, induced from the lattice structure of the second homology group of the surface.
Furthermore in this setting, Shioda provided an isomorphism between the Mordell--Weil group of $S$ and a group derived from the Néron--Severi lattice of the surface \parencite{Shioda1990}.
This isomorphism not only simplifies the computation of the group's rank, yielding the Shioda--Tate formula, but also imparts a lattice structure to the Mordell--Weil group.

In light of these observations, we provide in this paper a semi-numerical algorithm for efficiently computing the homology of elliptic surfaces, as well as high precision numerical approximations of its holomorphic periods. In turn, we obtain a heuristic method for computing their Mordell--Weil group and lattice. All in all, this yields practical computational methods to fields where elliptic surfaces naturally arise, such as the study of Feynman integrals \parencite{BlochKerrVanhove2018, Bonishetal2021, doran2023motivic} and mirror symmetry \parencite{CoxKatz1999, Horietal2003}.

\subsection*{Notations}
Throughout this text, for a smooth variety $X$ defined over a field $k$, the notation $H_n(X)$ will designate the $n$-th singular homology group with integer coefficients of $X$, while $H^n(X)$ will designate its $n$-th algebraic De Rham cohomology group with coefficients in $k$.

\subsection*{Contributions}
We provide an algorithm for computing the full homology lattice, with its intersection product, of an elliptic surface $S$ over the projective line $\Proj^1$. 
The input data is its defining equation as a polynomial $P_t\in \mathbb Q[t][X,Y,Z]$ that is homogeneous of degree $3$ in the variables $X$, $Y$, $Z$ so that the generic fibre defines a cubic in $\Proj^2$. 
When $S$ is not isotrivial, the algorithm also provides:
\begin{itemize}
\item the holomorphic period map $H^{2,0}(S)\times H_2(S)\to \C$ given numerically with certified precision bounds in quasilinear time with respect to precision;
\item an embedding of the Néron--Severi lattice in $H_2(S)$, obtained heuristically as the kernel of the holomorphic period map;
\item the Mordell--Weil group, with the lattice structure of its torsion free part, also depending on the heuristic of the Néron--Severi lattice.
\end{itemize}
By ``heuristically'', we mean that it is possible that the algorithm misses elements of the kernel or finds fake ones, in the case of numerical coincidence --- we provide more details at the end of \secref{sec:periods}.
The algorithm presented here is implemented in SageMath, as part of the package \mbox{\emph{lefschetz-family}}\footnote{\url{https://github.com/ericpipha/lefschetz-family}}.

\subsection*{Previous works}
Algebraic curves coincide with Riemann surfaces, and computation of their periods have been well studied \parencite{DeconinckVanHoeij2001,Swierczewski2017,BruinSijslingZotine2019,MolinNeurohr2019,Neurohr2018}.
Computations of the periods of some varieties realised as double cover ramified along hyperplane arrangements have been carried out in two distinct cases:
in \textcite{CynkvanStraten2019} in the case of Calabi-Yau threefolds given as double covers of $\Proj^3$ ramified along $8$ planes;
and in \textcite{ElsenhansJahnel2022} in the case of K3 surfaces given as a double cover of $\Proj^2$ ramified along $6$ lines.

A method for arbitrary dimensions was first given by \textcite{Sertoz2018} in the case of hypersurfaces.
A computationally cheaper and more general approach was developed in \textcite{LairezPichonVanhove23}.
The work presented in this paper is an improvement on the methods of \textcite{LairezPichonVanhove23}, adapted to the case of elliptic surfaces.

\subsection*{Outline}
We consider elliptic surfaces with a section.
In the case where the elliptic surface only has simple singular fibres --- i.e., singular fibres of type $I_1$ --- semi-numerical methods for computing the homology and periods were developed in \textcite{LairezPichonVanhove23}. 
One may always reduce to this case by deforming the elliptic surface in a way that separates its singular fibres into $I_1$ fibres, thus obtaining a Lefschetz fibration.
By continuity, the homology of this deformation is the same as that of the initial surface.
As the monodromy representation of the deformation only depends on the type of the singular fibres, this can be done formally, without having to consider an explicit realisation of such a morsification.

This provides a way to compute an effective basis of the full homology lattice of an elliptic surface.
Certain cycles can be expressed as \emph{extensions}, i.e., lifts of paths in~$\Proj^1$ avoiding the critical values to~$H_1(S_t)$.
The lattice generated by these cycles and the fibre components is a full rank sub-lattice of ``primary'' cycles $\Prim(S/\Proj^1)\subset H_2(S)$.
Numerical approximations of the holomorphic periods of primary cycles can be computed through numerical integration.
We may thus recover a numerical approximation of the full holomorphic period mapping~$H^{2,0}(S)\times H_2(S)\to \C$.
In turn, we may use the LLL algorithm to heuristically recover the Néron--Severi group in our basis of homology.
This allows the computation of several algebraic invariants of the fibration, notably the Mordell--Weil group, and the lattice associated to its torsion-free part.

\section{Elliptic surfaces}\label{sec:elliptic_surfaces}

In this section, we recall the definition of an elliptic surface, as well as related notions that are useful to our discussion, notably the action of monodromy and Kodaira's classification of singular fibres. For further reading on elliptic surfaces, we recommend \textcite{SchuttShioda10, Miranda1989}.

\begin{definition}
Let $V$ be a complex curve.
An {\em elliptic surface over $V$} is a complex surface $S$ along with a proper surjective map $f\colon S\to V$ such that
\begin{itemize}
\item for all but finitely many $t\in V$, the fibre $F_t = f^{-1}(t)$ is a smooth genus $1$ complex curve (i.e., an elliptic curve);
\item no fibre contains a smooth rational curve of self-intersection $-1$.
\end{itemize}
\end{definition}
The second condition is there to ensure that the surface is relatively minimal, as such rational curves can always be blown down.
We will use the shorthand $S/\Proj^1$ to designate the surface $S$ together with the map $S\to \Proj^1$.
We denote by $\Sigma$ the finite set of values $t\in V$ over which the fibre $F_t $ is not a smooth elliptic curve.

In the following, we consider an elliptic surface $f\colon S\to \Proj^1$ over $V = \Proj^1$.
\begin{definition}
A {\em section} of $S/\Proj^1$ is a map $\pi\colon  \Proj^1\to S$ such that $f\circ \pi = \operatorname{id}_{V}$.
\end{definition}

Sections of $S/\Proj^1$ are in bijection with $\C(t)$-points on the generic fibre $E$, which is an elliptic curve over $\C(t)$.
Indeed, given a section $\pi$, the intersection of its image with the generic fibre $\im\pi \cap E$ yields a point in $E$.
Conversely, given a point $P\in E(\C(t))$, its specialisation to any smooth fibre yields a point of the fiber. 
The closure $\Gamma$ of the union of all these points yields a birational morphism $f|_\Gamma\colon \Gamma\to \Proj^1$. 
The inverse of this map gives the section associated to $P$, which we denote $\bar P$.

Throughout this paper, we will only consider elliptic surfaces with a section.
We will notably fix a section $O$ of $S/\Proj^1$, which we call the \emph{zero section}.
It will serve as the zero of the group of rational points $E(\C(t))$ of the generic elliptic curve.
Furthermore, to avoid the trivial case of a product $E\times\Proj^1$, we require in the rest of this text that the elliptic surface has at least one singular fibre.

\subsection{Monodromy and extensions}
We briefly recall the notions of monodromy and extensions, following \textcite{lamotke} and Section 2.1.2 of \textcite{LairezPichonVanhove23}.
The restriction of $f$ to $f^{-1}(\Proj^1\setminus \Sigma)$ is a locally trivial fibration:
if $U\subset \Proj^1\setminus\Sigma$ is open and simply connected, there is a trivialisation $f^{-1}(U) \simeq F_b\times U$ of the fibration, for all $b\in U$.
In particular a path $\ell \colon  [0,1]\to \Proj^1\setminus \Sigma$ induces a diffeomorphism $F_{\ell(0)}\simeq F_{\ell(1)}$ which is unique up to some automorphism of $F_{\ell(1)}$ that is isotopic to the identity. 
Thus $\ell$ induces an isomorphism $\ell_*\colon H_1(F_{\ell(0)})\to H_1(F_{\ell(1)})$.
For $\ell$, $\ell'$ two such paths compatible for concatenation, one may show the composition formula 
\begin{equation}\label{eq:comp_monodromy}
(\ell'\ell)_* = \ell'_*\circ \ell_*\,,
\end{equation}
(where $\ell'\ell$ is the path that goes through $\ell$ first, then through $\ell'$). 
Furthermore, one may show that the map $\ell_*$ depends only on the homotopy class of $\ell$.

Let $b\in\Proj^1\setminus \Sigma$.
The above construction yields a map 
\begin{equation}
\begin{cases}\pi_1(\Proj^1\setminus\Sigma, b)\to \operatorname{Aut}(H_1(F_b))\\
 [\ell]\mapsto \ell_*
 \end{cases}\,,
\end{equation}
where $[\ell]$ denotes the homotopy class of $\ell$.
The map $\ell_*$ is called the {\em action of monodromy along $\ell$ on $H_1(F_b)$}.

As $F_b$ is an elliptic curve, its first homology group $H_1(F_b)$ is a lattice of rank $2$, equipped with a skew-symmetric intersection product.
As readily seen from the trivialisation of the fibration, monodromy preserves the intersection product.
Therefore, a simple computation shows that the matrix of the action of monodromy in a symplectic basis of $H_1(F_b)$ belongs to $\SL_2(\Z)$.
It is possible to compute this matrix with semi-numerical computations involving the Picard--Fuchs equation of $F_t$, see \textcite[\S3.5.2]{LairezPichonVanhove23}.
\\

A related notion to monodromy is that of {\em extensions}. 
Given a non-intersecting path $\ell$, a simply connected neighbourhood $V$ of $\im\ell$ and a $1$-chain $\Delta$ of $F_{\ell(0)}$, 
the identification of $\Delta\times \im\ell$ in $f^{-1}(V)\subset S$ produces a $2$-chain with boundary in $F_{\ell(0)}\cup F_{\ell(1)}$.
Once again, the relative homology class of this $2$-chain is well-defined in the relative homology group $H_2(S, F_{\ell(0)}\cup F_{\ell(1)})$, and only depends on the homotopy class of $\ell$ and the homology class of $\Delta$.
Therefore we define the {\em extension map} $\tau_{\ell}\colon  H_1(F_{\ell(0)}) \to H_2(S, F_{\ell(0)}\cup F_{\ell(1)})$.
Extensions satisfy a composition rule which allows to extend their definition to self-intersecting paths:
\begin{equation}\label{eq:extensions}
\tau_{\ell'\ell}(\gamma) = \tau_\ell(\gamma) + \tau_{\ell'}(\ell_*(\gamma)) \,,
\end{equation}
in $H_2(S, F_{\ell(0)}\cup F_{\ell(1)}\cup F_{\ell'(1)})$.
In particular, when $\ell$ is a loop pointed at $b$, we obtain a map
\begin{equation}
\tau\colon  \pi_1(\Proj^1\setminus\Sigma, b)\times H_1(F_b) \to H_2(S, F_b) : ([\ell], \Delta) \mapsto \tau_\ell(\Delta)\,.
\end{equation}

Extensions and monodromy are closely related by the formula
\begin{equation}\label{eq:boundary_extension}
\delta(\tau_\ell(\gamma)) = \ell_*\gamma - \gamma\,,
\end{equation}
where $\delta\colon  H_2(S, F_b)\to H_1(F_b)$ is the boundary map. This is represented in \figref{fig:thimbles}

\subsection{The Kodaira classification}
Kodaira provides a classification of the singular fibres of an elliptic fibration \textcite{Kodaira63}:
Let $\sigma\in\Sigma$ be a critical value and $\ell$ be the counter-clockwise simple loop around $\sigma$ pointed at $b$.
As stated in the previous section, the monodromy action $\ell_*$ is represented by a matrix $M\in \SL_2(\Z)$ in a symplectic basis of $H_1(F_b)$.
The $\SL_2(\Z)$-conjugacy class of $M$ determines the type of the singular fibre.

These conjugacy classes are classified in two infinite families $I_\nu$ and $I_\nu^*$, $\nu\in \N$ and six classes $II$, $III$, $IV$, $II^*$, $III^*$ and $IV^*$.
Representatives $M_T$ of these conjugacy classes are given in Table 1 of \textcite{CadavidVelez08}\footnote{There is a typo in Table 1 of \textcite{CadavidVelez08}: the m.n.f. for types $\textit{II}^*$ and $\textit{IV}^*$ are swapped.} (which is reproduced in \tabref{tab:KodairaClassification}), along with a factorisation as a product of $I_1$-type monodromy matrices which will prove useful in \secref{sec:local_morsification}, and the Euler characteristic of the fibre.
These singular fibres have been extensively studied --- for further reading on this topic, we recommend \textcite[Chap. 7]{Esole2017}.

\begin{table}[tp]
\begin{center}
\begin{tabular}{ ccccc } 
 \toprule
Type &$M_T$& \thead{Minimal normal\\  factorisation} & \thead{Euler characteristic\\ of the fibre}\\ 
 \midrule
  $\mathit{I}_\nu, \nu\ge 1$&$\left(\begin{array}{cc} 1& \nu \\ 0 & 1 \end{array}\right)$ & $U^\nu$& $\nu$ \\ 
  \addlinespace[5pt]
  $\mathit{II}$&$\left(\begin{array}{cc} 1& 1 \\ -1 & 0 \end{array}\right)$& $VU$& $2$\\ 
    \addlinespace[5pt]
  $\mathit{III}$& $\left(\begin{array}{cc} 0& 1 \\ -1 & 0 \end{array}\right)$& $VUV$& $3$ \\ 
    \addlinespace[5pt]
  $\mathit{IV}$& $\left(\begin{array}{cc} 0& 1 \\ -1 & -1 \end{array}\right)$& $(VU)^2$& $4$\\ 
    \addlinespace[5pt]
  $\mathit{I}_\nu^*, \nu\ge 0$&$\left(\begin{array}{cc} -1& -\nu \\ 0 & -1 \end{array}\right)$& $U^\nu(VU)^3$& $\nu+6$\\ 
    \addlinespace[5pt]
  $\mathit{II}^*$&$\left(\begin{array}{cc} 0& -1 \\ 1 & 1 \end{array}\right)$& $(VU)^5$& $10$\\ 
    \addlinespace[5pt]
  $\mathit{III}^*$&$\left(\begin{array}{cc} 0& -1 \\ 1 & 0 \end{array}\right)$& $VUV(VU)^3$& $9$\\ 
    \addlinespace[5pt]
  $\mathit{IV}^*$ &$\left(\begin{array}{cc} -1&-1 \\ 1 & 0 \end{array}\right)$& $(VU)^4$& $8$\\ 
    \addlinespace[5pt]
   \bottomrule
     \addlinespace[5pt]
\end{tabular}
\caption{The singular fibre types of the Kodaira classification, representatives of their $\SL_2(\Z)$ conjugacy class of the monodromy matrix, and the minimal normal factorisation of this representative in terms of the $I_1$-type matrices $U = \left(\begin{smallmatrix} 1& 1 \\ 0 & 1 \end{smallmatrix}\right)$ and $V = \left(\begin{smallmatrix} 1& 0 \\ -1 & 1 \end{smallmatrix}\right)$. This table is a reproduction of \textcite[Table 1]{CadavidVelez08}.}
\label{tab:KodairaClassification}
\end{center}
\end{table}

\section{Homology and periods of elliptic surfaces}

In this section, we discuss means to recover the full homology lattice from the knowledge of the mono\-dromy matrices of an elliptic fibration.
Let us first fix some notations.
Let $f\colon S\to \Proj^1$ be an elliptic fibration.
\begin{itemize}
\item $F_v$ denotes the fibre above $v\in\Proj^1$. We will also use this notation for the homology class of $F_v$ in $H_2(S)$ when there is no ambiguity;
\item $O$ denotes the zero section of $S/\Proj^1$;
\item $c_1, \dots, c_r\in \Proj^1$ denote the critical values of $f$, and $\Sigma = \{c_1, \dots, c_r\}$;
\item $m_v$ denotes the number of irreducible components of the fibre $F_v$;
\item $\Theta_0^v$ denotes the {\em zero component} of $F_v$, i.e., the irreducible component intersecting the zero section.
\item $\Theta_1^v, \dots, \Theta_{m_v-1}^v$ denote the irreducible components of the singular fibre $F_v$ that are disjoint from $O$;
\item $\T \subset H_2(S, F_b)$ denotes the image of $\tau$. For a basis $\ell_1, \dots, \ell_{r-1}$ of~${\pi_1(\Proj^1\setminus\Sigma, b)}$, we have $\T = \bigoplus_{i=1}^{r-1}\im\tau_{\ell_i}$ as a direct consequence of \eqref{eq:extensions};
\end{itemize}

We begin by a simple lemma connecting the homology groups $H_2(S)$ and $H_2(S, F_b)$.
\begin{lemma}\label{lem:quotient_kerd}
Let $\delta\colon  H_2(S, F_b)\to H_1(F_b)$ be the boundary map. We have
\begin{equation}
H_2(S)/\langle F_b\rangle \simeq \ker\delta\,.
\end{equation}
\end{lemma}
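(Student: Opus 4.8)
The plan is to read the isomorphism off the long exact sequence of the pair $(S, F_b)$ in singular homology with $\Z$ coefficients. Writing $i\colon F_b\hookrightarrow S$ for the inclusion, the relevant segment is
\[
  H_2(F_b)\xrightarrow{\ i_*\ } H_2(S)\xrightarrow{\ j_*\ } H_2(S,F_b)\xrightarrow{\ \delta\ } H_1(F_b),
\]
where $j_*$ is the canonical map and $\delta$ the boundary map of the statement. Exactness at $H_2(S,F_b)$ gives $\operatorname{im} j_* = \ker\delta$, so $j_*$ factors through an isomorphism $H_2(S)/\ker j_* \xrightarrow{\ \sim\ } \ker\delta$; exactness at $H_2(S)$ gives $\ker j_* = \operatorname{im} i_*$. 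Thus the lemma reduces to identifying $\operatorname{im} i_*$ with the subgroup $\langle F_b\rangle$.

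That identification is where the (very mild) content sits. Since $F_b$ is a smooth elliptic curve, it is a closed oriented real surface of genus $1$, so $H_2(F_b;\Z)\simeq\Z$, generated by its fundamental class, and $i_*$ sends this generator to the class of $F_b$ in $H_2(S)$ — precisely the class we denote $F_b$. Hence $\operatorname{im} i_*=\langle F_b\rangle$, and combining with the previous paragraph gives $H_2(S)/\langle F_b\rangle\simeq\ker\delta$.

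I expect no real obstacle; the only step deserving a sentence is the identification $\operatorname{im} i_* = \langle F_b\rangle$ just discussed, and one may add the remark that $i_*$ is in fact injective in our setting: the zero section $O$ gives $F_b\cdot O = 1$ in $H_2(S)$, so $F_b$ is a primitive (in particular non-torsion) class and $\langle F_b\rangle\simeq\Z$. This last point is not needed for the isomorphism but clarifies the structure of the subgroup being quotiented out.
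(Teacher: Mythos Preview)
Your proof is correct and follows exactly the same approach as the paper: both read the isomorphism directly off the long exact sequence of the pair $(S,F_b)$, using $H_2(F_b)=\langle F_b\rangle$. Your additional remark on the primitivity of $F_b$ via the zero section is a nice bonus but, as you note, unnecessary for the statement.
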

\begin{proof}
This is a direct consequence of the long exact sequence of relative homology of the pair $(S, F_b)$:
\begin{equation}\label{eq:les_rel_hom}
\langle F_b\rangle = H_2(F_b)\to H_2(S)\to H_2(S, F_b) \overset{\delta}{\to} H_1(F_b)\,.
\end{equation}
\end{proof}

Several cycles in $H_2(S)$ are distinguished in that their associated holomorphic periods (that is, periods of holomorphic forms) are directly computable, see \secref{sec:periods} below. 
In the next paragraph, we define a lattice $\Prim(S/\Proj^1)\subset H_2(S)$ of such cycles.
When the fibration is Lefschetz, $\Prim(S/\Proj^1)$ coincides with the full homology lattice $H_2(S)$.
In general, however, $\Prim(S/\Proj^1)$ may be a proper sublattice of $H_2(S)$.
Nevertheless, we will show in \secref{sec:periods} that $\Prim(S/\Proj^1)$ always has full rank.
In particular, all the periods of $S$ can be recovered from the periods of $\Prim(S/\Proj^1)$.

More precisely, the periods associated to components of fibres and the section are $0$.
Furthermore, \textcite{LairezPichonVanhove23} provides a way to compute the periods of extensions.
We call such cycles {\em primary} and define $\Prim(S/\Proj^1)$ as follows:
\begin{definition}
The {\em primary lattice} $\Prim(S/\Proj^1)$ is the sublattice of $H_2(S)$ generated by extensions, fibre components and the zero section:
\begin{equation}
\Prim(S/\Proj^1) = \phi^{-1}(\T)\oplus \langle O\rangle \oplus \left(O^\perp\cap H_2(\pi^{-1}(\Sigma))\right) \,,
\end{equation}
where $\phi$ is the map $H_2(S)\to H_2(S, F_b)$.
\end{definition}
Note that it follows from the long exact sequence \eqref{eq:les_rel_hom} that the first term is isomorphic to $(\T\cap\ker\delta) \oplus \langle F_b\rangle$. 
Furthermore $\pi^{-1}(\Sigma)$ is the disjoint union of the singular fibres. 
Thus a basis of the last term is given by the $\Theta_i^v$'s where $v$ ranges over $\Sigma$ and $i$ from $1$ to $m_v-1$.

\subsection{The Lefschetz case}\label{sec:lefschetz}

When all the singular fibres are of type $I_1$, $f$ is said to be a {\em Lefschetz fibration} of $S$.
In this setting, \textcite{LairezPichonVanhove23} provides an algorithm for computing an effective basis of $H_2(S)$ from the list of the monodromy matrices of a certain basis of the homotopy group $\pi_1(\C\setminus\Sigma)$ (where $\C\simeq \Proj^1\setminus\{\infty\}$ with $\infty$ a regular value). 
In particular, we have the following theorem.
\begin{theorem}\label{thm:primary_lefschetz}
When $f\colon S\to \Proj^1$ is a Lefschetz fibration, $\Prim(S/\Proj^1) = H_2(S)$.
\end{theorem}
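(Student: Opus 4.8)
The plan is to assemble $H_2(S)$ out of the pieces appearing in the definition of $\Prim(S)$ by a Leray/Mayer–Vietoris-style analysis of the fibration, using the fact that in the Lefschetz case every singular fibre is irreducible (type $I_1$), so the terms $\bigoplus_{i=1}^{m_v-1}\Theta^v_i$ are empty and $\Prim(S)=\phi(\T\cap\ker\delta)\oplus\langle O, F_b\rangle$. By \lemref{lem:quotient_kerd} it suffices to show two things: first, that $\langle O\rangle$ together with $\phi(\T\cap\ker\delta)$ surjects onto $H_2(S)/\langle F_b\rangle\simeq\ker\delta$ after adjoining the fibre class; and second, that the images assemble without hidden relations, i.e. that the sum is direct and of the right rank. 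Concretely, I would show $\ker\delta = \T\cap\ker\delta \;\oplus\; \langle O\rangle$ inside $H_2(S,F_b)$ (identifying $O$ with its relative class, which does lie in $\ker\delta$ since the section is a closed cycle), and then lift back via $\phi$.

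The key steps, in order. First I would fix a regular base point $b$ and an ``octopus'' basis $\ell_1,\dots,\ell_r$ of $\pi_1(\Proj^1\setminus\Sigma,b)$ with $\ell_1\cdots\ell_r=1$, so that $r$ loops around $r$ critical values of type $I_1$. Second, recall from \textcite{LairezPichonVanhove23} the description of $H_2(S,F_b)$ via the fibration over the disk minus points: $H_2(S,F_b)$ is built from the thimbles $\tau_{\ell_i}(\delta_i)$ (where $\delta_i$ is the vanishing cycle of $c_i$) together with the classes coming from $H_2(F_b)=\langle F_b\rangle$ and from a section. The relation \eqref{eq:boundary_extension}, $\delta(\tau_\ell(\gamma))=\ell_*\gamma-\gamma$, identifies $\ker\delta$ restricted to $\T$ with the kernel of the total monodromy action minus identity, i.e. with cycles $\gamma\in H_1(F_b)$ that are monodromy-invariant, plus the ``syzygies'' among the thimbles. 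Third, I would add the zero section $O$: its relative class is a $2$-cycle with trivial boundary, it is not a multiple of $F_b$ (since $O\cdot F_b = 1$ while $F_b\cdot F_b = 0$), and together with $\T\cap\ker\delta$ it accounts for all of $\ker\delta$. A clean way to see the last point is a rank count: $\operatorname{rk} H_2(S) = 2 + \sum_{v}(m_v-1) + \text{(number of extension generators)}$ by the Shioda–Tate/Euler-characteristic bookkeeping, and in the Lefschetz case $m_v=1$ for all $v$, so $\operatorname{rk} H_2(S) = 2 + \dim(\T\cap\ker\delta)$, which matches $\operatorname{rk}\Prim(S)$ exactly; since $\Prim(S)\subseteq H_2(S)$ is a sublattice of full rank, and both are primitively embedded (saturation follows from the explicit basis in \textcite{LairezPichonVanhove23}), they coincide.

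The main obstacle I expect is \emph{saturation}: showing not merely that $\Prim(S)$ has full rank but that $H_2(S)/\Prim(S)$ is trivial rather than merely finite. Full rank is essentially the Euler-characteristic count above, but ruling out torsion in the quotient requires knowing that the basis of extensions provided by \textcite{LairezPichonVanhove23} together with $O$ and $F_b$ is an \emph{integral} basis of $H_2(S)$, not just a $\Q$-basis. I would handle this by invoking the explicit cell-structure / Lefschetz-pencil argument of \textcite{LairezPichonVanhove23}: the thimbles over a CW-decomposition of the base, glued along $F_b$, give a cellular chain complex computing $H_\ast(S,F_b;\Z)$ on the nose, and combined with the long exact sequence of \lemref{lem:quotient_kerd} this pins down $H_2(S;\Z)$ integrally. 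Once integrality of the extension part is in hand, the only extra generator needed to hit all of $\ker\delta\cong H_2(S)/\langle F_b\rangle$ is the section class $O$, and $O\cdot F_b=1$ guarantees $\langle O,F_b\rangle$ is itself saturated in $H_2(S)$; a short diagram chase then upgrades the rank equality to an equality of lattices.
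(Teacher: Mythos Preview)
Your proposal is correct in outline and relies on the same underlying ingredients as the paper, but you organise the argument in a more roundabout way than necessary. The paper's proof is two lines: it invokes \lemref{lem:extensions_and_section}, which establishes the integral splitting $H_2(S,F_b)=\T\oplus\langle O\rangle$ directly from the long exact sequence of the triple $(S,S^*,F_b)$ together with Lamotke's main lemma (\lemref{lem:lamotke_main_lemma}) and the identification $H_2(S,S^*)\simeq H_0(F_b)$, and then combines this with \lemref{lem:quotient_kerd}. Since $\delta(O)=0$, the splitting restricts to $\ker\delta=(\T\cap\ker\delta)\oplus\langle O\rangle$ over $\Z$, and the theorem follows immediately.

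By contrast, you propose to first argue full rank via an Euler-characteristic/Shioda--Tate count and then upgrade to an equality of lattices by a separate saturation argument. This works, but the saturation step you worry about is a non-issue once you have the exact sequence of the triple: that sequence is already over $\Z$, and freeness of $H_0(F_b)$ makes the splitting integral, so no quotient-torsion can arise. In other words, the ``explicit cell-structure'' you plan to invoke from \textcite{LairezPichonVanhove23} is precisely Lamotke's lemma plus the triple sequence, and using it up front (as the paper does) collapses your two-stage argument into one. A minor slip: $F_b$ is zero in $H_2(S,F_b)$, so it does not appear in the description of that group; it only re-enters when you lift $\ker\delta$ back to $H_2(S)$ via \lemref{lem:quotient_kerd}.
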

In order to prove this theorem, we first recall the elements of the construction in \textcite{LairezPichonVanhove23} that are relevant to our discussion.
Let $V\subset \Proj^1$ be diffeomorphic to a disk, let $S^* = f^{-1}(V)$, let $\Sigma_V = \Sigma\cap V$, $s=|\Sigma_V|$ and pick $b\in V\setminus\Sigma_V$.

\begin{figure}[tp]
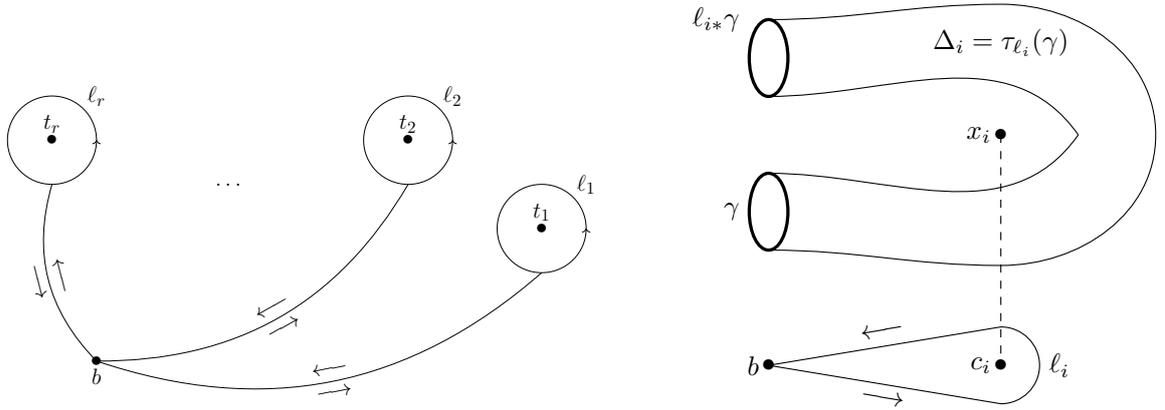

  \centering
  \begin{subfigure}[t]{0.50\linewidth}
    \scalefont{1.5}
    \tikzfig{homotopy_basis}
    \caption{A distinguished basis. The composition $\ell_1, \dots, \ell_r$ is represented by the loop encircling all the critical values once counterclockwise.}
    \label{fig:distinguished_basis}
  \end{subfigure}
  \hspace{1em}
    \scalefont{1}
  \begin{subfigure}[t]{0.4\linewidth}
        \tikzfig{thimbles}
  \caption{The thimble $\Delta_i\in H_2(S, F_b)$ above a critical value $c_i$ is the nontrivial extension of a $1$-cycle $\gamma\in H_1(F_b)$ along a loop $\ell_i$ around a unique critical value $c_i$. For a given such loop $\ell_i$, there is a unique thimble (up to sign). Its boundary is $\delta\Delta_i = {\ell_i}_{*}\gamma -\gamma$.}
  \label{fig:thimbles}
  \end{subfigure}
  \caption{Distinguished bases and thimbles.}
\end{figure}

\begin{definition}
A basis $\ell_1, \dots, \ell_s$ of $\pi_1(V\setminus\Sigma_V, b)$ is {\em distinguished} if for every $i$, $\ell_i$ is isotopic to a simple (i.e., with winding number $1$) counterclockwise loop around a critical value of $\Sigma_V$, and the composition $\ell_r\cdots\ell_1$ is isotopic to a simple counterclockwise loop around all the points of $\Sigma_V$. This is represented in \figref{fig:distinguished_basis}.
\end{definition}
If $\ell_1, \dots, \ell_s$ is a distinguished basis of $\pi_1(V\setminus\Sigma, b)$, then for each $i$, $\im \tau_{\ell_i} \subset H_2(S^*, F_b)$ has rank $1$ --- its generator (up to sign) is called the {\em thimble} above $c_i$, denoted $\Delta_i$. 
The boundary of $\Delta_i$ is called the {\em vanishing cycle} at $c_i$ and is a generator of the image of ${\ell_i}_*-\operatorname{id}$, as can be readily seen from \eqref{eq:boundary_extension}. 
An illustration of a thimble is given in \figref{fig:thimbles}.

Thimbles serve as building blocks for $H_2(S^*)$, as the following lemma demonstrates.
\begin{lemma}[{\textcite[Main lemma]{lamotke}}]\label{lem:lamotke_main_lemma}
\begin{equation}
H_2(S^*, F_b) = \bigoplus_{i=1}^s\Z\Delta_i
\hspace{1em} \text{ and } \hspace{1em}
H_1(S^*, F_b)=0\,.
\end{equation}
\end{lemma}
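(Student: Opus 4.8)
The plan is to reduce the statement to a \emph{local} computation at each critical value, reassembled using the locally trivial fibration structure of $f$ over $V\setminus\Sigma_V$. First I would fix a \emph{distinguished system of paths}: pairwise disjoint embedded arcs $p_i$ from $b$ to a point $b_i$ on the boundary of small, pairwise disjoint closed disks $\bar D_i$ around $c_i\in\Sigma_V$, chosen so that the loops $p_i^{-1}\cdot(\partial D_i)\cdot p_i$ represent the distinguished basis $\ell_1,\dots,\ell_s$. Set $Y' = \{b\}\cup\bigcup_i p_i$ and $Y = Y'\cup\bigcup_i\bar D_i$. Since $V$ is a disk it deformation retracts onto $Y$, hence $S^* = f^{-1}(V)$ deformation retracts onto $f^{-1}(Y)$; and since $Y'$ is a contractible tree over which $f$ restricts to a trivial bundle, $F_b\hookrightarrow f^{-1}(Y')$ is a homotopy equivalence.

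The second step is a splitting. Write $f^{-1}(Y) = f^{-1}(Y')\cup\bigcup_i f^{-1}(\bar D_i)$, where $f^{-1}(Y')$ meets $f^{-1}(\bar D_i)$ exactly in the fibre $F_{b_i}$ and the $f^{-1}(\bar D_i)$ are mutually disjoint. Excision, together with the long exact sequences of the triples $\big(f^{-1}(Y),f^{-1}(Y'),F_b\big)$, then gives
\[
H_k(S^*, F_b)\ \cong\ H_k\big(f^{-1}(Y),f^{-1}(Y')\big)\ \cong\ \bigoplus_{i=1}^s H_k\big(f^{-1}(\bar D_i),F_{b_i}\big)
\]
for all $k$, using that $F_b\hookrightarrow f^{-1}(Y')$ and (after transport along $p_i$) $F_{b_i}\hookrightarrow f^{-1}(\bar D_i)$ induce isomorphisms in homology. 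It therefore suffices to compute $H_k\big(f^{-1}(\bar D_i),F_{b_i}\big)$ for each $i$.

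For this local computation I would invoke that, in the Lefschetz case, every $F_{c_i}$ is of type $I_1$ — an irreducible nodal cubic — so $f$ has a single, nondegenerate critical point over $c_i$, with local analytic normal form $(z_1,z_2)\mapsto z_1 z_2 + c_i$ on a bidisk. The standard Picard--Lefschetz / Morse-theoretic analysis of this model (this is exactly what Lamotke's construction of the thimble does, cf.\ \figref{fig:thimbles}) shows that $f^{-1}(\bar D_i)$ is homotopy equivalent to $F_{b_i}$ with a single $2$-cell glued on along the vanishing cycle $\delta\Delta_i$ of \eqref{eq:boundary_extension}, the attached cell being the thimble $\Delta_i$ itself. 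Collapsing $F_{b_i}$ identifies the quotient with $S^2$, whence $H_k\big(f^{-1}(\bar D_i),F_{b_i}\big)\cong\tilde H_k(S^2)$, i.e.\ $\Z\Delta_i$ for $k=2$ and $0$ otherwise. Combined with the splitting, this yields $H_2(S^*,F_b) = \bigoplus_{i=1}^s\Z\Delta_i$ and $H_1(S^*,F_b) = 0$ (indeed $H_k(S^*,F_b) = 0$ for all $k\neq 2$).

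I expect the genuine difficulty to lie in this last, local step: establishing rigorously that $f^{-1}(\bar D_i)$ is obtained, up to homotopy, from the nearby fibre by attaching a \emph{single} $2$-cell — that no higher cells appear — which requires the local normal form of the node together with a gradient-flow deformation argument. By comparison, the global assembly (retraction onto the spider $Y$, triviality of $f$ over the tree $Y'$, and the excision splitting) is routine once the system of paths has been chosen compatibly with the distinguished basis; in practice one simply cites Lamotke's Main Lemma, which carries out this local analysis once and for all.
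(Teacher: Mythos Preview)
The paper does not supply its own proof of this lemma; it simply cites Lamotke's ``Main lemma''. Your sketch is a faithful reconstruction of that argument (retraction onto the spider $Y$, homotopy equivalence $F_b\hookrightarrow f^{-1}(Y')$ over the tree, excision into the local pieces $\big(f^{-1}(\bar D_i),F_{b_i}\big)$, and the local Picard--Lefschetz cell attachment), so there is nothing substantive to compare.

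One slip to correct: in the displayed chain of isomorphisms you justify the second one ``using that \ldots\ $F_{b_i}\hookrightarrow f^{-1}(\bar D_i)$ induce isomorphisms in homology''. That claim is false and, fortunately, not what you need. Indeed the whole point of the local step is that $f^{-1}(\bar D_i)$ is $F_{b_i}$ with a $2$-cell attached along the vanishing cycle, so the inclusion kills that cycle in $H_1$ and is \emph{not} a homology isomorphism. The first isomorphism $H_k(S^*,F_b)\cong H_k\big(f^{-1}(Y),f^{-1}(Y')\big)$ uses only the homotopy equivalence $F_b\simeq f^{-1}(Y')$ (via the long exact sequence of the triple), and the second isomorphism is pure excision, using that $f^{-1}(Y')\cap f^{-1}(\bar D_i)=F_{b_i}$ and that the $f^{-1}(\bar D_i)$ are disjoint. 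Drop the erroneous clause and the argument stands.
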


In particular when $V = \Proj^1\setminus\{\infty\}$ for a regular point $\infty\notin \Sigma$, the following lemmas show how to recover $H_2(S)$ from $H_2(S^*, F_b)$.

\begin{lemma}
$\ker\left(\iota\colon  H_2(S^*, F_b)\to H_2(S, F_b)\right) = \im \tau_{\infty}$, where $\tau_\infty = \tau_{\ell_r\cdots\ell_1}$.
In other words, an element of $H_2(S^*, F_b)$ is trivial in $H_2(S, F_b)$ if and only if it is a sum of extensions along isotopy classes of $\pi_1(V\setminus\Sigma, b)$ that are trivial in $\pi_1(\Proj^1\setminus\Sigma, b)$.
\end{lemma}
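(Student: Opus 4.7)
The plan is to apply the long exact sequence of the triple $(S, S^*, F_b)$, whose relevant fragment is
\[
  H_3(S, S^*) \xrightarrow{\partial} H_2(S^*, F_b) \xrightarrow{\iota} H_2(S, F_b).
\]
By exactness, $\ker \iota = \im \partial$, so the problem reduces to identifying the image of the connecting map $\partial$ with $\im \tau_\infty$.

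To compute $H_3(S, S^*)$, I would observe that $S \setminus S^* = F_\infty$, and since $\infty$ is a regular value of $f$ the normal bundle of $F_\infty \subset S$ is the pullback of the trivial normal bundle of $\infty \in \Proj^1$. A tubular neighbourhood $N$ of $F_\infty$ is therefore diffeomorphic to $F_\infty \times D$ for a small disk $D \subset \Proj^1$ around $\infty$, and $N \setminus F_\infty$ deformation retracts to $F_\infty \times \partial D$. Excision and the Künneth formula (equivalently, the Thom isomorphism for this trivial bundle) then give
\[
  H_3(S, S^*) \;\cong\; H_3(N, N \setminus F_\infty) \;\cong\; H_1(F_\infty).
\]

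It then remains to describe $\partial$ geometrically: a 1-cycle $\gamma \subset F_\infty$ lifts to the relative 3-chain $\gamma \times D$, whose boundary is the 2-cycle $\gamma \times \partial D \subset S^*$. Fixing a path $p$ in $V \setminus \Sigma_V$ from $b$ to a point of $\partial D$ and letting $\gamma' := (p^{-1})_* \gamma \in H_1(F_b)$, the composition rule \eqref{eq:extensions} matches $[\gamma \times \partial D]$ with the extension $\tau_{\ell_\infty}(\gamma') \in H_2(S^*, F_b)$, where $\ell_\infty := p^{-1} \cdot \partial D \cdot p$ is a small simple loop around $\infty$ based at $b$. Identifying $V$ with $\C$, the loop $\partial D$ is homotopic (after basepoint change via $p$) in $V \setminus \Sigma_V$ to the composite $\ell_r \cdots \ell_1$, up to orientation, since it encircles exactly the points of $\Sigma_V$; hence $\im \tau_{\ell_\infty} = \im \tau_\infty$. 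As $\gamma$ ranges over $H_1(F_\infty)$, the transport $\gamma'$ ranges over $H_1(F_b)$, so $\im \partial = \im \tau_\infty$.

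The main obstacle is this last geometric identification of $\partial$ with the extension map $\tau_{\ell_\infty}$: it requires carefully matching the tubular trivialisation and its orientations to the composition rule \eqref{eq:extensions}, and recognising a large loop in $\C$ as $\ell_r \cdots \ell_1$. The rest is a routine use of the long exact sequence of a triple combined with the Thom isomorphism for a trivial disk bundle.
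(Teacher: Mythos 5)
Your proof is correct. The paper itself simply defers to diagram (16) of \textcite{LairezPichonVanhove23}, which is the Lamotke-style decomposition assembled exactly as you do: the long exact sequence of the triple $(S, S^*, F_b)$ together with the tubular-neighbourhood computation $H_3(S, S^*)\cong H_1(F_\infty)$ and the resulting description of the connecting map as extension around a large loop. For the step you flag as the main obstacle, note that the monodromy around $\partial D$ is trivial because $f$ is a locally trivial fibration over the disk $D$; the composition rule \eqref{eq:extensions} then gives $\tau_p(\gamma')+\tau_{p^{-1}}(p_*\gamma')=\tau_{p^{-1}p}(\gamma')=0$, hence $\tau_{\ell_\infty}(\gamma')=\tau_{\partial D}(p_*\gamma')=[\gamma\times\partial D]$ as needed.
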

\begin{proof}
This is the second line of diagram (16) in \textcite{LairezPichonVanhove23}.
\end{proof}

\begin{lemma}\label{lem:extensions_and_section}
There is a split short exact sequence
\begin{equation}
0 \to \T\to H_2(S, F_b)\to H_0(F_b)\to 0\,,
\end{equation}
where the first map is the inclusion and the second map is the intersection with the generic fibre $F_b$.
In other words, $H_2(S, F_b) = \T \oplus \langle O\rangle$.
\end{lemma}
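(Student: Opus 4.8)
The plan is to build the short exact sequence directly from the geometry of the fibration and then exhibit an explicit splitting. First I would establish exactness. The intersection pairing with $F_b$ sends a relative cycle in $H_2(S,F_b)$ to its boundary component count, i.e. its image in $H_0(F_b)\simeq\Z$; equivalently, this is the composition $H_2(S,F_b)\to H_1(F_b)$ followed by... no, more precisely it is the map induced by intersecting with a generic fibre, which makes sense because a generic $F_b$ meets a relative $2$-chain in finitely many points and the signed count is a homotopy invariant. To see surjectivity I would use the zero section $O$: its class in $H_2(S,F_b)$ (a section meets $F_b$ transversally in one point) maps to a generator of $H_0(F_b)\simeq\Z$. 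For exactness at $H_2(S,F_b)$, I would argue that $\T$ lies in the kernel — an extension $\tau_\ell(\gamma)$ has boundary contained entirely in $F_b$ and, being swept out over a path in the base, meets a generic fibre in a $1$-chain, hence has zero intersection number with $F_b$ — and conversely that any class with zero intersection number is, after subtracting a suitable multiple of $O$, represented by a relative cycle supported over a proper sub-disk of $V$, which by \lemref{lem:lamotke_main_lemma} is a $\Z$-combination of thimbles, hence lies in $\T$. This last point is where I expect the main work: one needs that $H_2(S,F_b)$ is generated by $\T$ together with $O$, which should follow by comparing with $H_2(S^*,F_b)=\bigoplus\Z\Delta_i$ from \lemref{lem:lamotke_main_lemma} and the preceding lemma identifying the kernel of $\iota$ with $\im\tau_\infty\subset\T$, together with the contribution of the section once one passes from $S^*=f^{-1}(V)$ to $S$.

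The splitting is then immediate: the map $\Z\to H_2(S,F_b)$, $1\mapsto [O]$, is a section of the intersection-with-$F_b$ map since $O\cdot F_b=1$. Hence the sequence splits and $H_2(S,F_b)=\T\oplus\langle O\rangle$.

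The hard part will be the exactness at the middle term, specifically showing $\ker\bigl(H_2(S,F_b)\to H_0(F_b)\bigr)\subseteq\T$. The clean way is to run the long exact sequence of the triple or to use the diagram from \textcite{LairezPichonVanhove23} together with \lemref{lem:lamotke_main_lemma}: since $H_1(S^*,F_b)=0$ and $H_2(S^*,F_b)=\bigoplus_i\Z\Delta_i$ is generated by extensions, every relative class over the disk $V$ is a combination of thimbles; the section $O$ accounts precisely for the rank-one cokernel measured by intersection with $F_b$, and passing to $S$ only quotients by $\im\tau_\infty$, which already lies in $\T$. I would also need to verify that the map "intersection with $F_b$" is well-defined on relative homology and agrees with the connecting-type description used implicitly above — a standard transversality argument — and that the two maps in the sequence compose to zero, which is the observation that extensions sweep out over arcs in the base and therefore have trivial fibrewise intersection.
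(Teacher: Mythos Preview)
Your proposal is correct, and the ``clean way'' you sketch in the final paragraph is exactly the paper's proof: take the long exact sequence of the triple $(S,S^*,F_b)$, identify $H_2(S,S^*)\simeq H_0(F_b)$ via K\"unneth using $(S,S^*)\simeq F_b\times(D,S^1)$, and use $H_1(S^*,F_b)=0$ from \lemref{lem:lamotke_main_lemma} to get exactness on the right; the image of $H_2(S^*,F_b)$ in $H_2(S,F_b)$ is $\T$ since $H_2(S^*,F_b)=\bigoplus_i\Z\Delta_i$ consists entirely of extensions.

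Your first-paragraph geometric argument is a faithful unpacking of the same mechanism rather than a genuinely different route: the crucial step --- that a relative class with zero fibre-intersection can be represented by a chain avoiding the fibre at infinity, hence lies in the image of $H_2(S^*,F_b)$ --- is precisely what exactness of the triple sequence at $H_2(S,F_b)$ asserts, so you would end up proving the same thing. One small slip of phrasing: in ``any class with zero intersection number is, after subtracting a suitable multiple of $O$\ldots'', no subtraction is needed if the intersection is already zero; the subtraction is what reduces an \emph{arbitrary} class to the zero-intersection case. For the splitting, the paper just notes that $H_0(F_b)$ is free, while you exhibit the explicit section $1\mapsto[O]$; both are valid, and yours has the advantage of naming the complement $\langle O\rangle$ as in the lemma's second sentence.
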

\begin{proof}
We have the long exact sequence of the triplet $(S, S^*, F_b)$:
\begin{equation}
H_2(S^*, F_b) \to H_2(S, F_b)\to H_2(S, S^*)\to H_1(S^*, F_b)\,.
\end{equation}
It follows from \lemref{lem:lamotke_main_lemma} that the image of the first map is $\T$.
As $(S, S^*)\simeq F_b\times (D, S^1)$, the Künneth formula yields 
\begin{equation}
H_2(S, S^*)\simeq H_0(F_b)\,,
\end{equation}
where the identification is the intersection product with the generic fibre $F_b$.
Finally, from \lemref{lem:lamotke_main_lemma}, $H_1(S^*, F_b)=0$.
The sequence splits because $H_0(F_b)$ is free.
\end{proof}
\begin{proof}[Proof of \thmref{thm:primary_lefschetz}]
The proposition follows from \lemref{lem:extensions_and_section} and \lemref{lem:quotient_kerd}.
\end{proof}

\subsection{Morsification}

\begin{figure}[tp]
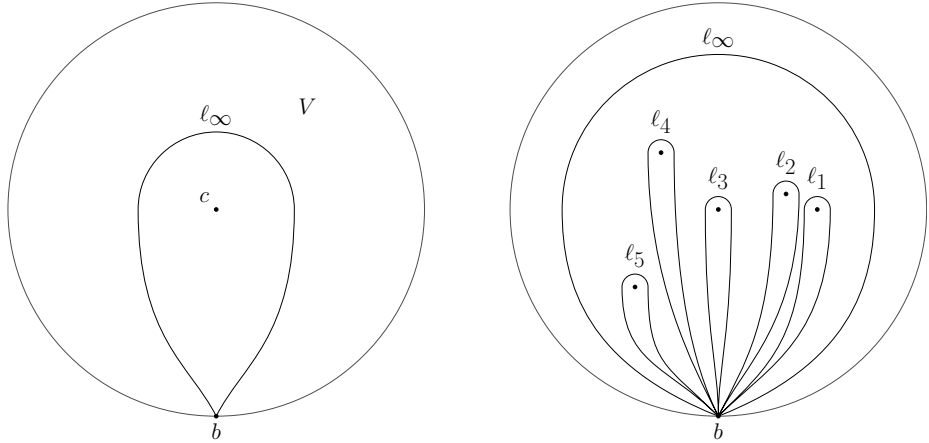

  \centering
  \begin{subfigure}[t]{0.35\linewidth}
    \scalefont{4}
    \tikzfig{morsification_2}
  \end{subfigure}
  \hspace{1em}
    \scalefont{4}
  \begin{subfigure}[t]{0.35\linewidth}
        \tikzfig{morsification}
  \end{subfigure}
  \vspace{-2em} 
  \caption{The morsification of a neighbourhood of a single critical value. {\em Left:} A neighbourhood $V$ of a single critical value of the elliptic fibration $f$, along with a chosen basepoint $b$.
  The homotopy group $\pi_1(V\setminus\{c\}, b)$ is generated by the counterclockwise loop $\ell_\infty$. 
  {\em Right:} 
  The neighbourhood after morsification. 
  The critical fibre has split into five Lefschetz fibres. 
  The homotopy group $\pi_1(V\setminus \Sigma)$ is generated by the $5$ counterclockwise loops $\ell_1, \dots, \ell_5$. 
  Thus $H_2(f^{-1}(V), F_b)$ has rank $5$ --- it follows from \tabref{tab:KodairaClassification} that the original singular fibre was of type $I_5$.
  Furthermore we see that $\tau_{\ell_\infty} = \tau_{\ell_5\dots \ell_1}$.}
  \label{fig:morsification}
\end{figure}

We have seen in the previous section that the data of the monodromy matrices of the elliptic surface is sufficient to recover the homology in the Lefschetz case.
In this section we extend this result to general elliptic surfaces.

In a nutshell, it is possible to deform the elliptic surface in a way that splits the non-Lefschetz singular fibres into several Lefschetz fibres.
Therefore it is possible to compute the homology of the deformed elliptic surface from its monodromy matrices, which is by construction diffeomorphic to the original one.
Interestingly, the monodromy matrices of the Lefschetz fibres of the deformation are determined by the monodromy matrices of the initial surface.
In particular, this implies that we do not have to do any computations with (or even find) an explicit realisation of such a deformation.
Instead it only serves as a formal computational tool.

More precisely, let $V\subset \Proj^1$ be open and $D = \{z\in\C \mid |z|\le1\}$ denote the unit disk.
\begin{definition}
Let $S\to V$ be an elliptic surface. 
Consider a commutative diagram of proper surjective holomorphic maps between complex manifolds
\begin{equation}
    \begin{tikzcd} 
      \tilde S \arrow[r, "\tilde f"] \arrow[rd, "\eta"]& V\times D \arrow[d, "p"]\\
      & D
    \end{tikzcd}\,,
  \end{equation}
  where $p$ is the projection onto the second coordinate.
For $u\in D$, denote $\tilde S_u = \eta^{-1}(u)$ and $f_u = \tilde f|_{\tilde S_u}\colon \tilde S_u\to V$ where we identify $V\times\{u\}\simeq V$. 
  Such a diagram is a {\em morsification of  $S\to V$} if
  \begin{itemize}
  \item $\eta: \tilde S\to D$ has no critical values;
  \item $f_0: \tilde S_0 \to V$ is identified with $S\to V$;
  \item for $u\in D\setminus\{0\}$, $f_u: \tilde S_u \to V$ is a Lefschetz elliptic surface.
  \end{itemize}
\end{definition}
\begin{remark}
Such a deformation is sometimes also called a {\em splitting deformation} of the elliptic surface.
\end{remark}
\begin{remark}
Note that by Ehresmann's fibration theorem \parencite{Ehresmann_1951}, there is a trivialisation $\tilde S \simeq S\times V$. In particular this implies that for every $u\in D$, $\tilde S_u$ is diffeomorphic to $S$, but not necessarily biholomorphic.
\end{remark}
Morsifications of elliptic surfaces are useful as they allow the use of the results for Lefschetz fibrations to obtain information about general elliptic surfaces.
Indeed as $\eta$ is a trivial fibration, it induces an isometry $H_2(S_0)\simeq H_2(S_u)$ for all $u\in D$, and we may apply the results of \secref{sec:lefschetz} to describe the latter.
The existence of a morsification of any elliptic surface is guaranteed by the following theorem of \textcite{Moishezon77}.

\begin{theorem}[{\textcite[Thm. 8]{Moishezon77}}]\label{thm:morsification}
Let $S\to V$ be an elliptic surface. 
There exists a morsification of $S\to V$. 
Moreover, the number of singular fibres of $f_u\colon \tilde S_u\to V$ for $u\in D\setminus\{0\}$ does not depend on $u$.
\end{theorem}
\begin{remark}
Because we demand $f_u\colon \tilde S_u\to V$ to be a Lefschetz fibration for $u\ne 0$, the number of singular fibres is equal to the Euler characteristic of $S$, which explains why it has to be constant.
\end{remark}

We will apply this result to neighbourhoods of each critical value to obtain local morsifications.
For $c\in \Sigma$, define $D_c$ a closed disk around $c$ (i.e., continaing $c$ in its interior) in $\Proj^1$ such that $b\notin D_c$
Let $\ell_c$ be a path connecting $b$ to a point $b_c\in \partial D_c$.
Assume that for $c\ne c'$, $D_c\cap D_{c'} = \emptyset$ and the interior of $\ell_c$ and $\ell_{c'}$ do not intersect.
Let $\infty\notin \bigcup_{c\in\Sigma}D_c \cup \{b\}$, identify $\C = \Proj^1\setminus\{\infty\}$, and let $S^* = f^{-1}(\C)$.
Define $T_c = f^{-1}(D_c)$ and $F_{b_c} = f^{-1}(b_c)$.
Then we have the following lemma.
\begin{lemma}\label{lem:local_decomp}
The inclusion yields an isomorphism
\begin{equation}
\bigoplus_{c\in \Sigma} H_*(T_c, F_{b_c}) \to H_*(S^*, F_b)\,,
\end{equation}
where the identification $F_{b_c}\simeq F_b$ is given by $\ell_c$.
\end{lemma}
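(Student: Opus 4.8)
The plan is to build the isomorphism from a Mayer--Vietoris type decomposition of $S^*$ along the pieces $T_c$ together with a ``spine'' connecting them to the basepoint. Concretely, let $W \subset \C$ be a contractible neighbourhood of the union $\{b\} \cup \bigcup_{c\in\Sigma}(\ell_c \cup D_c)$ that deformation retracts onto this union, chosen small enough that $f^{-1}(W)$ retracts onto the union of the $T_c$ glued to $F_b$ along the tubes $f^{-1}(\ell_c)$. Since each tube $f^{-1}(\ell_c)$ is diffeomorphic to $F_b \times [0,1]$ (the fibration is trivial over the contractible path $\ell_c$), it is homotopy equivalent to $F_b$, and the identification $F_{b_c}\simeq F_b$ in the statement is precisely the one induced by this trivialisation. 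The first step is therefore to replace $H_*(S^*,F_b)$ by $H_*(f^{-1}(W),F_b)$: I would argue that $f^{-1}(W)\hookrightarrow S^*$ together with $F_b\hookrightarrow F_b$ induces an isomorphism on relative homology because $S^*$ deformation retracts onto $f^{-1}(W)$ — indeed $\C\setminus\Sigma$ retracts onto $W\setminus\Sigma$ since any regular value can be pushed along paths into $W$, and this retraction lifts to $S^*$ by local triviality away from $\Sigma$, fixing the fibres over $\Sigma$ and over $b$.

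Next I would set up an excision/Mayer--Vietoris argument on $f^{-1}(W)$. Write $X = f^{-1}(W)$ as the union of the pieces $T_c$ and the ``spider body'' $N = f^{-1}(U)$ where $U$ is a contractible neighbourhood of $\{b\}\cup\bigcup_c \ell_c$ meeting each $D_c$ only in a collar of $\partial D_c$. Then $N$ retracts onto $F_b$, each intersection $T_c\cap N$ retracts onto $F_{b_c}\simeq F_b$, and the $T_c$ are pairwise disjoint after this arrangement. Applying the Mayer--Vietoris sequence of the relative pairs — or equivalently iterating excision, excising the interior of $N$ away from a neighbourhood of $F_b$ — gives
\begin{equation}
H_*(X, N) \;\simeq\; \bigoplus_{c\in\Sigma} H_*\bigl(T_c,\, T_c\cap N\bigr)\,.
\end{equation}
Since $T_c\cap N \simeq F_{b_c}$ via $\ell_c$, the right-hand side is $\bigoplus_c H_*(T_c, F_{b_c})$. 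For the left-hand side, the pair $(X,N)$ has the same relative homology as $(X, F_b)$ because $N$ deformation retracts onto $F_b$ inside $X$ (use the long exact sequences of $(X,F_b)$, $(X,N)$ and $(N,F_b)$, the last being zero since $N\simeq F_b$). Combining the two identifications yields the claimed isomorphism, and by construction it is induced by the inclusions $T_c\hookrightarrow X\simeq S^*$, as required.

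The main obstacle I expect is bookkeeping the basepoint/fibre identifications coherently: the relative homology $H_*(T_c,F_{b_c})$ is taken with respect to $F_{b_c}=f^{-1}(b_c)$, whereas the target is relative to $F_b$, and these are matched only through the chosen path $\ell_c$ and the (non-canonical but homotopically canonical) trivialisation of $f$ over $\ell_c$. One must check that the composite map $H_*(T_c,F_{b_c})\to H_*(T_c, T_c\cap N)\to H_*(X,N)\to H_*(X,F_b)$ agrees with ``extend relative cycles along $\ell_c$'' and that this is independent of the chosen trivialisation up to the isotopy ambiguity already discussed after \eqref{eq:comp_monodromy}; this is exactly the content packaged in the extension maps $\tau_{\ell_c}$, so it is consistent with the framework but needs to be stated carefully. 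A secondary technical point is justifying the excision step: one should thicken $N$ slightly so that its closure's interior covers the overlaps, or equivalently invoke the Mayer--Vietoris sequence for the cover $\{X\setminus \mathrm{int}\,N,\ N\}$ of $X$ rel $F_b$; since $X\setminus\mathrm{int}\,N$ is the disjoint union $\bigsqcup_c T_c$ up to homotopy, and $H_*$ of a disjoint union splits, the sequence collapses to the direct-sum isomorphism. Everything else — the retraction of $S^*$ onto $f^{-1}(W)$, the triviality of $f$ over contractible sets, the vanishing $H_*(N,F_b)=0$ — is routine once the geometric picture of \figref{fig:distinguished_basis} is in place.
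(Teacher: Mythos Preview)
Your proposal is correct and follows essentially the same approach as the paper: the paper's one-line proof invokes the retraction of $\C$ onto the spine $\bigcup_{c\in\Sigma}D_c\cup\ell_c$, lifted through the fibration, and defers the remaining excision bookkeeping to \textcite[\S5.3]{lamotke}; you have simply written out those details (the retraction of $S^*$ onto $f^{-1}(W)$, the excision of the spider body $N$, and the identification $(X,N)\simeq(X,F_b)$) explicitly. The care you take with the basepoint identifications via $\ell_c$ is exactly what the statement's clause ``where the identification $F_{b_c}\simeq F_b$ is given by $\ell_c$'' is encoding, so nothing further is needed there.
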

\begin{proof}
The main line of the argument is that the retraction of $\C$ to $\bigcup_{c\in\Sigma}D_c\cup \ell_c$ lifts through the fibration. For more details, see \textcite[\S5.3]{lamotke}.
\end{proof}

Notice that $T_c\to D_c$ is an elliptic surface with a single singular fibre.
In particular it is sufficient to study how morsifications behave on such elliptic surfaces.
This is the content of the next section.

\subsubsection{Local morsification}\label{sec:local_morsification}

Let $V\subset\Proj^1$ be a disk in $\Proj^1$ and consider an elliptic surface $S\to V$ with a single singular fibre.
Let $c$ denote the critical value and $b$ a regular value on the boundary of $V$.
Fix a symplectic basis of $H_1(F_b)$ and let $M_\infty$ be the monodromy matrix around $c$ in this basis.
From \thmref{thm:morsification}, there exists a morsification of $S\to V$
\begin{equation}
    \begin{tikzcd} [sep = .5 cm]
       \tilde S \arrow[r, "\tilde f"] \arrow[rr, dashed, bend right, "\eta"]& V\times D \arrow[r, "p"]& D
    \end{tikzcd}\,.
\end{equation}
Pick $u\ne 0$, and consider $S' = \tilde S_u = \eta^{-1}(u)$ with the Lefschetz fibration $f_u:S'\to V$.
Denote by $r$ its number of singular fibres and by $\Sigma$ its set of critical values.
We denote by $F'_v$ the fibre $f_u^{-1}(v)$.
\begin{lemma}\label{lem:splitting_fibres}
Following the terminology of \textcite{CadavidVelez08}, the number $r$ of singular fibres of $S'\to V$ is the number of factors in the minimal normal factorisation of $M_T$ (see \tabref{tab:KodairaClassification}).
Let $G_r\dots G_1$ be this factorisation.
Let $A\in \SL_2(\Z)$ be a matrix such that $M_\infty = AM_TA^{-1}$.
Then there is a distinguished basis of $\pi_1(D\setminus \Sigma, b)$ such that the corresponding monodromy matrices $M_1, \dots, M_r$ are given by
$M_i = AG_iA^{-1}$.
\end{lemma}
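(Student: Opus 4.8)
The plan is to derive the statement from Moishezon's morsification theorem (\thmref{thm:mosrification}), an Euler characteristic count, and the classification of minimal positive factorisations in $\SL_2(\Z)$ of \textcite{CadavidVelez08}. A matrix $A$ with $M_\infty = AM_TA^{-1}$ exists because the Kodaira type of the fibre over $c$ determines the $\SL_2(\Z)$-conjugacy class of $M_\infty$ (\secref{sec:elliptic_surfaces}) and $M_T$ is the chosen representative of that class. Fix any distinguished basis $\ell_1,\dots,\ell_r$ of $\pi_1(V_t\setminus\Sigma_t,b)$ and let $M_1,\dots,M_r$ be the monodromy matrices of $\tilde f|_{S'}$ along the $\ell_i$, read in the fixed symplectic basis of $H_1(F_b)$. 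This tuple has two properties. Each $M_i$ is the monodromy of an $I_1$ fibre of a complex elliptic surface, hence, by the Picard--Lefschetz formula, a positive Dehn twist, i.e.\ $\SL_2(\Z)$-conjugate to $U$ (the $I_1$ case in \tabref{tab:KodairaClassification}). And $\ell_r\cdots\ell_1$ is isotopic in $V_t\setminus\Sigma_t$ to a loop parallel to $\partial V_t$, so $M_r\cdots M_1$ is the monodromy along that loop; since $\eta$ has no critical values and the singular fibres of each $\tilde f|_{S_u}$ lie over the interior of $V_u$, the restriction of $\tilde f$ to the preimage of $\partial V\times D$ is a locally trivial fibration over $\partial V\times D$, so the monodromy around the boundary circle $\partial V_u$ does not depend on $u$, and taking $u=0$ it equals the monodromy around $c$ in $S=S_0$, which is $M_\infty$. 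Hence $M_r\cdots M_1 = M_\infty$.

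Next I would identify $r$. As $\eta$ is a trivial fibration, $S$ and $S'$ are diffeomorphic, so $\chi(S)=\chi(S')$. By additivity of the Euler characteristic for elliptic fibrations with isolated singular fibres (see \textcite{SchuttShioda10}), and since a smooth elliptic fibre has Euler characteristic $0$, one gets $\chi(S)=\chi(F_c)$, the value listed in the last column of \tabref{tab:KodairaClassification}, and $\chi(S')=r$, as each of the $r$ singular fibres of $S'$ has type $I_1$ and hence Euler characteristic $1$. Inspecting \tabref{tab:KodairaClassification} shows that $\chi(F_c)$ equals the number of factors in the minimal normal factorisation of $M_T$; hence $r$ equals that number, which is the first assertion. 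In particular $r$ is the minimal length of a positive factorisation of any matrix conjugate to $M_T$.

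It remains to realise the factorisation $(AG_1A^{-1},\dots,AG_rA^{-1})$ of $M_\infty$ by a distinguished basis. We now have two positive factorisations of $M_\infty$ into conjugates of $U$, both of the minimal length $r$: the tuple $(M_1,\dots,M_r)$ obtained above, and $(AG_1A^{-1},\dots,AG_rA^{-1})$. By \textcite{CadavidVelez08}, the minimal positive factorisations of a fixed element of $\SL_2(\Z)$ into conjugates of $U$ form a single orbit under the Hurwitz (braid) action (the global simultaneous conjugation ambiguity appearing in some formulations being, over $\SL_2(\Z)$, itself realised within that action); hence these two factorisations are Hurwitz equivalent. The Hurwitz action on the tuple of monodromy matrices is exactly the action of changes of distinguished basis of $\pi_1(V_t\setminus\Sigma_t,b)$, and the distinguished bases of a punctured disk form a single Hurwitz orbit (see \textcite{lamotke}); therefore there is a distinguished basis whose monodromy matrices are precisely $(AG_1A^{-1},\dots,AG_rA^{-1})$. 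As this holds for any valid $A$, the conclusion does not depend on the choice of $A$.

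I expect the crux to be this uniqueness statement for minimal positive factorisations in $\SL_2(\Z)$, which is the substantive input from \textcite{CadavidVelez08}, together with the standard fact that distinguished bases of a punctured disk form one Hurwitz orbit. Everything else is bookkeeping; the delicate points are the orientation conventions for the loops and the order in which monodromies compose, the positivity of the $I_1$ monodromies (which is where the complex structure of $S'$ is used), and checking that the global conjugation ambiguity is indeed harmless for $\SL_2(\Z)$.
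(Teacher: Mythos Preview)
Your proposal is correct and follows essentially the same route as the paper's proof: an Euler-characteristic count gives $r$, and the transitivity of the Hurwitz action on minimal positive factorisations from \textcite{CadavidVelez08}, combined with the realisation of elementary Hurwitz moves by changes of distinguished basis, yields the second assertion. You spell out more carefully than the paper why the product $M_r\cdots M_1$ equals $M_\infty$ (via the triviality of $\eta$ over $\partial V\times D$), but otherwise the argument is the same.
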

\begin{proof}
The first part is simply the observation that
\begin{equation}
r = \chi(S) = \chi(S') = \sum_{v\in\Sigma}\chi(F'_v)\,,
\end{equation}
and $\chi(F'_v)=1$ for every $v$ as $F'_v$ is of type $I_1$. 
For the second part, let $\ell_1, \dots, \ell_r$ be a distinguished basis of $\pi_1(D\setminus\Sigma, b)$.
Let $1\le i\le r-1$ and notice that the bases
\begin{equation}
(\ell_1, \dots, \ell_{i-1}, \ell_i\ell_{i+1}\ell_i^{-1}, \ell_i, \ell_{i+2}, \dots, \ell_r)
\end{equation}
and
\begin{equation}
(\ell_1, \dots, \ell_{i-1}, \ell_{i+1}, \ell_{i+1}^{-1}\ell_i\ell_{i+1}, \ell_{i+2}, \dots, \ell_r)
\end{equation}
are also distinguished bases.
Then the result is a direct application of \textcite[Thm.19]{CadavidVelez08}, as Hurwitz moves on the product $M_r\dots M_i$ can be achieved by the above changes of basis, see \eqref{eq:comp_monodromy}.
\end{proof}

\begin{remark}
As shown by \textcite[Thm. 21]{CadavidVelez08}, the choice of the factorisation of $M_T$ as a product of $I_1$ monodromy matrices does not matter. We could equivalently take any such minimal factorisation, such as the ones in \textcite[Table 5]{Naruki87}.
\end{remark}

Pick such a distinguished basis $\ell_1, \dots, \ell_r$ of $\pi_1(D\setminus \Sigma, b)$ and let $\Delta_1, \dots, \Delta_r$ be the corresponding thimbles.
Then the trivialisation of $\tilde S$ through $\eta$ yields an isomorphism $H_2(S, F_b) \simeq H_2(S', F'_b) = \bigoplus_{i=1}^r \Z\Delta_i$.
We conclude with two lemmas linking extensions and components of the singular fibre of $f:S\to V$ to this basis of thimbles.

\begin{lemma}\label{lem:inclusion_extensions}
Let $\ell_\infty$ be the simple loop around $c$ pointed at $b$. Then
\begin{equation}
\tau_{\ell_\infty} = \sum_{i=1}^r \tau_{\ell_i}\circ {\ell_{i-1}}_*\circ\cdots\circ{\ell_1}_*\,.
\end{equation}
\end{lemma}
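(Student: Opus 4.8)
The plan is to reduce the claimed identity to the composition rule for extensions, \eqref{eq:extensions}, together with the fact that $\ell_\infty$ is homotopic (in $D\setminus\Sigma$, pointed at $b$) to the composition $\ell_r\cdots\ell_1$ coming from the distinguished basis. By the defining property of a distinguished basis (see \figref{fig:distinguished_basis} and \figref{fig:morsification}), the simple counterclockwise loop $\ell_\infty$ around the single original critical value $c$ equals, up to homotopy in $D\setminus\Sigma$, the loop encircling all $r$ morsified critical values once counterclockwise, which is precisely $\ell_r\cdots\ell_1$. Since $\tau_\ell$ depends only on the homotopy class of $\ell$, we have $\tau_{\ell_\infty} = \tau_{\ell_r\cdots\ell_1}$, and it remains to expand the right-hand side.

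The second step is a straightforward induction on $k$ using \eqref{eq:extensions}. Applying \eqref{eq:extensions} to $\ell' = \ell_k$ and $\ell = \ell_{k-1}\cdots\ell_1$ gives
\begin{equation}
\tau_{\ell_k\cdots\ell_1} = \tau_{\ell_{k-1}\cdots\ell_1} + \tau_{\ell_k}\circ(\ell_{k-1}\cdots\ell_1)_*\,,
\end{equation}
and by \eqref{eq:comp_monodromy} we have $(\ell_{k-1}\cdots\ell_1)_* = {\ell_{k-1}}_*\circ\cdots\circ{\ell_1}_*$ (with the convention that the empty composition, for $k=1$, is the identity). Unwinding the recursion from $k=1$ to $k=r$ yields
\begin{equation}
\tau_{\ell_r\cdots\ell_1} = \sum_{i=1}^r \tau_{\ell_i}\circ{\ell_{i-1}}_*\circ\cdots\circ{\ell_1}_*\,,
\end{equation}
which is the claimed formula. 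One subtlety to address is that \eqref{eq:extensions} as stated is an identity in a relative homology group with boundary spread over several fibres $F_{\ell(0)}\cup F_{\ell(1)}\cup F_{\ell'(1)}$; here all the intermediate fibres agree (every $\ell_i$ is a loop based at $b$), so all these groups are canonically $H_2(S, F_b)$ and the sum makes sense there. I would also remark that, strictly speaking, one should replace each $\ell_i$ by a slightly perturbed non-self-intersecting representative to invoke \eqref{eq:extensions}, then pass back using homotopy invariance; since the paper has already carried out this kind of reduction when extending $\tau$ to loops, a one-line reference suffices.

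The only mild obstacle is bookkeeping: making sure the order of composition in the monodromy factor matches the geometric order in which the distinguished loops are traversed, i.e. that the innermost loop $\ell_1$ is applied first. This is fixed by being careful about the conventions in \eqref{eq:comp_monodromy} and \eqref{eq:extensions} (the path $\ell'\ell$ goes through $\ell$ first), and by checking the base case $i=1$ of the sum, where the monodromy factor is empty and $\tau_{\ell_1}$ appears unmodified, consistent with $\tau_{\ell_\infty}$ restricted to a single loop. No deep input is needed beyond the composition rules already established; the lemma is essentially a formal consequence of them.
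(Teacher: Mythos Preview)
Your argument is correct and follows the same line as the paper: reduce to $\tau_{\ell_\infty}=\tau_{\ell_r\cdots\ell_1}$ by homotopy, then expand via the composition rule~\eqref{eq:extensions} (which the paper invokes in one line, and you spell out as an induction). The only refinement in the paper's version is that it locates the homotopy in the total deformation space $(V\times D)\setminus\tilde\Sigma$, with $\tilde\Sigma=\bigcup_{u\in D}\Sigma_u\times\{u\}$ the global discriminant, which is the honest setting in which the loop $\ell_\infty$ at deformation parameter $u=0$ can be compared to $\ell_r\cdots\ell_1$ at $u=t$; your appeal to \figref{fig:morsification} and the trivialisation of $\eta$ amounts to the same identification.
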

\begin{proof}
For $u\in D$, let $\Sigma_u$ be the set of critical values of $f_u\colon S_u \to V$.
Define $\tilde\Sigma = \bigcup_{u\in D}\Sigma_u\times\{u\}$.
$\tilde\Sigma$ is an analytic set of $V\times D$ and the projection onto $D$ is a finite morphism of degree $r$, totally ramified at $c$.
Clearly, $\ell_\infty$ and $\ell_r\dots\ell_1$ have the same homotopy class in $\pi_1((V\times D)\setminus\tilde\Sigma)$.
The lemma is then a direct application of \eqref{eq:extensions}.
\end{proof}

\begin{lemma}\label{lem:inclusion_singular_components}
The inclusion of the components of $F_0$ in $H_2(S, F_b)$ coincides with the kernel of the boundary map:
\begin{equation}
\bigoplus_{i=1}^{m_c-1} \left\langle\Theta^c_i\right\rangle = \ker\left( \delta \colon  H_2(S', F'_b)\to H_1(F'_b) \right)\,.
\end{equation}
In particular, $r=m_c$ if $F_c$ has type $I_{m_c}$ and $m_c+1$ otherwise.
\end{lemma}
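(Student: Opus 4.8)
The plan is to transport the statement to the original local surface and read off $\ker\delta$ from the long exact sequence of the pair, using that a neighbourhood of a Kodaira fibre deformation retracts onto it. Keep the notation $S\to V$ for the local elliptic surface with single singular fibre $F_0$ over $c$, and $S'\to V_t$, $F'_b$, $\Delta_1,\dots,\Delta_r$ for its morsification as above. The trivialisation of $\eta$ gives a homeomorphism of pairs $(S,F_b)\simeq(S',F'_b)$, and in particular an isomorphism $H_2(S,F_b)\simeq H_2(S',F'_b)$ that intertwines the two boundary maps to $H_1(F_b)\simeq H_1(F'_b)$; under this identification $\Theta_i^c$ denotes the class of the component $\Theta_i^c\subset F_0$ in $H_2(S,F_b)$. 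It therefore suffices to show $\bigoplus_{i=1}^{m_0-1}\langle\Theta_i^c\rangle=\ker\bigl(\delta\colon H_2(S,F_b)\to H_1(F_b)\bigr)$.

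First I would collect the topological input on $S$. Since $V$ retracts onto $c$, the surface $S=f^{-1}(V)$ deformation retracts onto $F_0$ (see \textcite[\S5.3]{lamotke}), so $H_*(S)=H_*(F_0)$; inspecting the configuration of each Kodaira type shows that $H_2(F_0)$ is freely generated by the classes of the $m_0$ irreducible components $\Theta_0^c,\dots,\Theta_{m_0-1}^c$, and that $H_1(F_0)=0$ unless $F_0$ is of multiplicative type $I_\nu$, in which case $H_1(F_0)\simeq\Z$. Writing the fibre divisor as $F_0=\sum_{i=0}^{m_0-1}a_i\Theta_i^c$, the smooth fibre is homologous to it, so $[F_b]=\sum_i a_i[\Theta_i^c]$ in $H_2(S)$; and since $\Theta_0^c$ is a reduced (multiplicity-one) component in every Kodaira type — equivalently, the zero section meets $F_0$ transversally at a single smooth point of $\Theta_0^c$ — we have $a_0=1$.

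Then I would invoke \lemref{lem:quotient_kerd}: the isomorphism $H_2(S)/\langle F_b\rangle\simeq\ker\delta$ is precisely the map induced by $H_2(S)\to H_2(S,F_b)$, so $\ker\delta$ is the image of this map. As $[F_b]=\Theta_0^c+\sum_{i\ge1}a_i\Theta_i^c$ has $\Theta_0^c$-coefficient $a_0=1$, it is a primitive element of the free group $H_2(S)=\bigoplus_i\Z\Theta_i^c$, so $H_2(S)/\langle F_b\rangle$ is free of rank $m_0-1$, generated by the images of $\Theta_1^c,\dots,\Theta_{m_0-1}^c$. In $H_2(S,F_b)$ the class of $F_b$ vanishes, so the image of $\Theta_0^c$ equals $-\sum_{i\ge1}a_i[\Theta_i^c]$, and hence $\ker\delta$ is generated by $[\Theta_1^c],\dots,[\Theta_{m_0-1}^c]$; a free abelian group of rank $m_0-1$ generated by $m_0-1$ elements has these as a basis, which is the claimed equality. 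For the final assertion, note that $r=\chi(S')=\chi(S)=\chi(F_0)$ (the last step again by the retraction), and comparing $\chi(F_0)$ with the number $m_0$ of components across \tabref{tab:KodairaClassification} gives $r=\nu=m_0$ for type $I_\nu$ and $r=m_0+1$ for every other type. (Equivalently: $H_2(S',F'_b)$ is free of rank $r$ by \lemref{lem:lamotke_main_lemma}, so $\operatorname{rank}\im\delta=r-(m_0-1)$; by exactness $\im\delta=\ker\bigl(H_1(F_b)\to H_1(S)\bigr)$ with $H_1(S)=H_1(F_0)$, which has rank $2$ when $F_0$ is additive and rank $1$ when $F_0$ has type $I_\nu$, since the specialisation $H_1(F_b)\to H_1(F_0)\simeq\Z$ then kills exactly the vanishing cycle.)

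I expect the main obstacle to be the topological input of the second paragraph — the deformation retraction $S\simeq F_0$, the freeness of $H_2(F_0)$ on the components of the fibre, and the multiplicity-weighted identity $[F_b]=\sum_i a_i[\Theta_i^c]$. These are classical facts about Kodaira fibres, but they are not fibrewise statements and the multiplicities genuinely matter, so they should be stated and cited with some care; granting them, the rest is a routine exact-sequence and rank computation on top of \lemref{lem:quotient_kerd}.
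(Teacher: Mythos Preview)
Your proof is correct, but it takes a genuinely different route from the paper's. The paper argues by a lattice comparison, case by case over the Kodaira types: it computes the intersection matrix of (a lift of) $\ker\delta$ directly from the thimble description and the explicit monodromy factorisation, reads off its discriminant, and checks that this matches the discriminant of the component lattice given in Kodaira's tables; since $\bigoplus_i\langle\Theta_i^c\rangle\subset\ker\delta$ and the two sublattices have the same discriminant, they coincide. Your argument is instead uniform and purely topological: you retract the tube $S$ onto the singular fibre $F_0$, identify $H_2(S)$ with the free group on the components, and then use the long exact sequence of the pair (the local instance of \lemref{lem:quotient_kerd}) together with the primitivity of $[F_b]$ coming from $a_0=1$ to read off $\ker\delta$ directly. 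Your approach is cleaner and avoids any case-by-case lattice computation; the paper's approach stays entirely within the thimble/monodromy data that the algorithm already manipulates, so it doubles as a consistency check on those intersection computations. Two minor remarks: \lemref{lem:quotient_kerd} is stated for the global surface, so it is worth saying explicitly that its proof---just the long exact sequence of the pair---applies verbatim to the local $S\to V$; and the retraction of $f^{-1}(V)$ onto the central fibre is more precisely Clemens' contraction (or Ehresmann applied radially) than what is in \textcite[\S5.3]{lamotke}, though the conclusion is certainly standard.
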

\begin{proof}
The direct inclusion is clear.
The rank of this kernel is~$r-1$ in the case of fibres of type $I_\nu$ and~$r-2$ in the other cases.
Therefore, if the mentioned equality holds, the last statement follows.

Let us detail the proof of the equality in the case of a fibre of type $I_3$.
From \lemref{lem:splitting_fibres}, its morsification splits it into $3$ fibres of type $I_1$, for each of which the monodromy matrix is (up to a global conjugation) $U$.
There are thus $3$ thimbles, the restriction of the boundary map to these thimbles has rank $1$, and the kernel thus has rank $3-1 =2$.
The intersection matrix of (the lift in $H_2(S)$ of) this kernel is 
\begin{equation}
\left(\begin{array}{rr}
-2 & -1 \\
-1 & -2
\end{array}\right)\,,
\end{equation}
and it is thus a sublattice of discriminant $3$.
As this coincides with the discriminant of the sublattice generated by the components of singular fibres \parencite{Kodaira63}, and as one is contained in the other, these sublattices are equal.

A similar direct computation gives the same result for each possible fibre type.

\end{proof}

An illustration of the effects of a morsification is provided in \figref{fig:morsification}.

\subsection{Global homology and periods}\label{sec:periods}\label{sec:homology_and_periods}

We are now ready to give an algorithm for computing $H_2(S)$ (with its lattice structure) from the action of monodromy on $F_b$ of the fibration $S\to \Proj^1$.

\begin{algorithm}
\caption{Homology of elliptic surface}\label{alg:homology}
\begin{algorithmic}
\Require the monodromy matrices $M_1, \dots, M_r \in \SL_2(\Z)$
\Ensure a description of $H_2(S)$ with its lattice structure
\State $N \gets [\hspace{.5em}]$
\For{$1\le i\le r$ decreasing}
    \State Find $A\in SL_2(\Z)$ and $T$ such that $M_i = AM_TA^{-1}$ \Comment{See \tabref{tab:KodairaClassification} for $M_T$}
    \For{$W$ in the minimal normal factorisation of $M_T$}
    \State Append $AWA^{-1}$ to $N$
    \EndFor
\EndFor
\For{$M_i$ in $N$}
	\State Compute $d_i\in \Z^{2\times 1}$ and $m_i\in \Z^{1\times 2}$ such that $M_i = I_2 + d_i m_i$.
	\State $T_i \gets (-1)^{n-1}   \left(\begin{array}{ccc} &\mathbf 0& \\\hline &m_i&\\\hline &\mathbf 0& \end{array}\right)$\,, where $m_i$ is the $i$-th line.
\EndFor
\State $B\gets \left(
    \begin{array}{c|c|c}
      & & \\
      d_1 & \dotsb & d_r \\
      & &
    \end{array}
  \right)$
\State $T_\infty \gets T_1 + T_2 M_1 + T_3 M_2 M_1 + \dotsb + T_r M_{r-1} \dotsb M_1$
\State $k \gets \ker B$
\State $i\gets \im T_\infty$
\State $H\gets k/i$

\noindent$H$ is identified to the subspace of $H_2(S^*)/H_2(F_b)$ generated by extensions. The (representatives of) vectors of $H$ are the coordinates in the basis of thimbles of $H_2(S^*, F_b)$. For more details, see \textcite[\S\S3,5]{LairezPichonVanhove23}.

\end{algorithmic}
\end{algorithm}

Note that per \lemref{lem:inclusion_singular_components} the sublattice generated by the components of a given singular fibre is explicitly identified in this description of homology; and per \lemref{lem:inclusion_extensions}, the extensions of $\T\cap \ker\delta$ are also explicitly identified.

To conclude this section, we provide a way to compute the periods of certain $2$-forms on this basis of homology.
Let $\omega\in H^2(S)$ and assume that $\omega = \omega_t\wedge \ud t$ for some $1$-form $\omega_t \in H^1(S)$, where $t$ denotes the dependance on a coordinate of $\Proj^1$.
Then the integral of $\omega$ on an extension can be obtained as a path integral of a period of the elliptic fibre via the observation that
\begin{equation}\label{eq:computing_periods}
\int_{\tau_\ell(\eta)}\omega = \int_{\ell}\left(\int_{\eta_t}\omega_t\right)\ud t\,,
\end{equation}
where $\eta_t\in H_1(F_t)$ is the unique deformation of $\eta\in H_1(F_b)$ along $\ell$.
In particular, it is possible to recover the periods of $\omega$ on $\T$ with high precision from the Picard--Fuchs equation of $\omega_t$ using numerical integration methods with quasilinear algorithmic complexity with respect to precision alone \textcite{VanDerHoeven1999, Mezzarobba2010}.
In practice, we rely on the implementation provided in SageMath by~\textcite{Mezzarobba2016} in the \emph{ore\_algebra} package \parencite{KauersJaroschekJohansson2015}.
For further details on the computation of periods on thimbles, see \textcite[\S3.7]{LairezPichonVanhove23}.

As the fibre components are localised in a single fibre, the periods of $\omega$ on a fibre component are all zero.
The following lemma shows that this information, i.e., the periods on extension and fibre components, is sufficient to recover the full period mapping.

\begin{lemma}\label{lem:fullrank}
The primary lattice $\Prim(S/\Proj^1)$ has full rank.
\end{lemma}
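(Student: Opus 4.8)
The plan is to count ranks. The primary lattice is, by definition,
\[
\Prim(S) = \phi(\T \cap \ker\delta)\oplus \langle O, F_b\rangle \oplus \bigoplus_{v\in \Sigma}\bigoplus_{i=1}^{m_v-1} \Theta^v_i\,,
\]
so it suffices to show that $\operatorname{rank}\Prim(S) = \operatorname{rank} H_2(S)$. The Shioda--Tate formula gives $\operatorname{rank} H_2(S) = \operatorname{rank}\NS(S) + 2\cdot\dim H^{2,0}(S)$, and $\operatorname{rank}\NS(S) = 2 + \operatorname{rank}\MW(S) + \sum_{v\in\Sigma}(m_v-1)$. Hence it is enough to prove that $\operatorname{rank}\big(\phi(\T\cap\ker\delta)\big) = \operatorname{rank}\MW(S) + 2\dim H^{2,0}(S)$, since the summands $\langle O,F_b\rangle$ and $\bigoplus_v\bigoplus_i\Theta_i^v$ already account for the remaining $2 + \sum_v(m_v-1)$.

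The key computational step is therefore to compute $\operatorname{rank}(\T\cap\ker\delta)$. First I would invoke \lemref{lem:extensions_and_section}, which gives $H_2(S,F_b) = \T\oplus\langle O\rangle$, and \lemref{lem:quotient_kerd}, which gives $H_2(S)/\langle F_b\rangle\simeq\ker\delta$. Since $O$ is a section, $O\cap F_b$ is a single point, so $O\notin\ker\delta$ is not quite what one wants directly; rather, one should note $\delta$ restricted to $\langle O\rangle$ is injective onto a rank-$1$ subgroup of $H_1(F_b)$, while $\delta|_\T$ lands in a subgroup whose rank can be read off from the monodromy. Concretely, $\delta(\T)$ is spanned by all vanishing cycles $\ell_{i*}\gamma-\gamma$, equivalently by the images of $M_i - I$ over all singular fibres; by \eqref{eq:boundary_extension} and since the $M_i$ generate the monodromy group with trivial total product (the loop around all critical values is contractible in $\Proj^1$), $\delta(\T)$ has finite index in $H_1(F_b)$ precisely when the monodromy acts irreducibly, i.e. when $S$ is not isotrivial — and since we assume at least one singular fibre, $\delta(\T)$ has rank $2$ in the non-isotrivial case. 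Thus $\operatorname{rank}(\T\cap\ker\delta) = \operatorname{rank}\T - 2$.

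Now $\operatorname{rank}\T = \operatorname{rank} H_2(S,F_b) - 1 = \operatorname{rank}\ker\delta + \operatorname{rank}\delta(H_2(S,F_b)) - 1$. Since $\delta(H_2(S,F_b))\supseteq\delta(\T)$ has rank $2$ and sits inside the rank-$2$ group $H_1(F_b)$, it has rank $2$; and $\operatorname{rank}\ker\delta = \operatorname{rank} H_2(S) - 1$ by \lemref{lem:quotient_kerd}. Hence $\operatorname{rank}\T = \operatorname{rank} H_2(S) - 1 + 2 - 1 = \operatorname{rank} H_2(S)$, so $\operatorname{rank}(\T\cap\ker\delta) = \operatorname{rank} H_2(S) - 2$. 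Finally $\operatorname{rank}\Prim(S) = (\operatorname{rank} H_2(S)-2) + 2 + \sum_v(m_v-1)$; but this overshoots unless one is careful — the point is that $\phi(\T\cap\ker\delta)$, $\langle O,F_b\rangle$ and the $\Theta_i^v$ are \emph{not} independent in $H_2(S)$, and the definition of $\Prim(S)$ as a direct sum is a formal one on generators, so what we actually need is that the span of these generators in $H_2(S)$ has full rank. I would instead argue directly: the span of $\phi(\T\cap\ker\delta)$ together with $F_b$ is, via \lemref{lem:quotient_kerd}, the span of $\T\cap\ker\delta$ lifted plus $\langle F_b\rangle$, which has rank $\operatorname{rank}(\T\cap\ker\delta)+1 = \operatorname{rank} H_2(S)-1$; adding $O$, which is not in $\ker\delta$ hence not in this span, gives rank $\operatorname{rank} H_2(S)$. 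The singular-component summands only add torsion-to-rank redundancy, so full rank is already achieved.

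The main obstacle I anticipate is the claim that $\delta(\T)$ has rank $2$ in the non-isotrivial case: this requires knowing that the monodromy representation of a non-isotrivial elliptic surface over $\Proj^1$ with a singular fibre does not fix a rational line in $H_1(F_b)\otimes\Q$. This is classical — it follows from the fact that a nonconstant map to the $j$-line forces the image of monodromy to be Zariski-dense in $\SL_2$, or from the rigidity statement that an invariant line would make the period map constant — but pinning down the cleanest citation (e.g. to \textcite{Shioda1990} or a standard reference on variations of Hodge structure) is the delicate point. Everything else is bookkeeping with the long exact sequences already established in \secref{sec:lefschetz} and the Shioda--Tate formula.
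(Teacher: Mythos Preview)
Your argument has a genuine gap. The key step is your invocation of \lemref{lem:extensions_and_section} to conclude $H_2(S,F_b)=\T\oplus\langle O\rangle$, from which you derive $\operatorname{rank}\T=\operatorname{rank} H_2(S)$ and then dismiss the singular components $\Theta_i^v$ as ``torsion-to-rank redundancy''. But \lemref{lem:extensions_and_section} is stated and proved only for Lefschetz fibrations; its proof relies on \lemref{lem:lamotke_main_lemma}, which identifies $H_2(S^*,F_b)$ with the span of thimbles. In the non-Lefschetz case this identification holds only after morsification, and the extensions $\T$ of the \emph{original} fibration form a strictly smaller subgroup: by \lemref{lem:inclusion_singular_components} the components $\Theta_i^c$ span $\ker\delta_c\subset H_2(T_c,F_{b_c})$, and the paper's own proof shows that over~$\Q$ this is a complement to $\im\tau_c$, not contained in it. So the singular components are precisely what is missing from $\T\oplus\langle O\rangle$, and your rank count $\operatorname{rank}(\T\cap\ker\delta)=\operatorname{rank} H_2(S)-2$ is off by $\sum_v(m_v-1)$.

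There is also a smaller slip: you assert that $\delta|_{\langle O\rangle}$ is injective and later that $O\notin\ker\delta$. The section $O$ is a closed $2$-cycle in $S$, so its image in $H_2(S,F_b)$ has $\delta O=0$. The correct reason $O$ contributes an extra rank is that $O\in\ker\delta$ but $O\notin\T$ (since $O\cdot F_b=1$ while extensions have trivial intersection with the fibre).

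The paper's proof avoids all of this. It works locally: for each $c\in\Sigma$ one has $H_2(T_c,F_{b_c})_\Q=(\ker\delta_c)_\Q\oplus(\im\tau_c)_\Q$, and combining this with \lemref{lem:local_decomp} and \lemref{lem:inclusion_singular_components} gives directly that $(\T\cap\ker\delta)_\Q\oplus\bigoplus_{c,i}\langle\Theta_i^c\rangle_\Q=(\ker\delta)_\Q$. No Shioda--Tate, no irreducibility of monodromy, and the singular components are essential rather than redundant. Your global rank-counting strategy can be repaired, but only by first establishing that $\operatorname{rank}\T=\operatorname{rank} H_2(S)-\sum_v(m_v-1)$ in the general case --- and the cleanest way to see that is exactly the local decomposition the paper uses.
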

\begin{proof}
For each $c\in \Sigma$, $H_2(T_c, F_{b_c})_\Q = \left(\ker\delta_c\right)_\Q\oplus \left(\im\tau_c\right)_\Q$.
Furthermore, $\T = \bigoplus_{c\in \Sigma} \im\tau_c$.
For a {$\Z$-module} $A$, denote by $A_\Q$ the tensor product $A\otimes\Q$.
From \lemref{lem:inclusion_singular_components} and  \lemref{lem:local_decomp}, we have that
\begin{equation}
\begin{split}
\left(\T \cap\ker\delta\right)_\Q &\oplus \bigoplus_{\substack{c\in\Sigma\\ 1\le i \le m_c-1}}\langle\Theta^c_i\rangle_\Q \oplus \langle O\rangle\\
&= \left(\ker\delta\right)_\Q \cap \bigoplus_{c\in\Sigma} \left(\im\tau_c\right)_\Q\oplus \left( \ker\delta_c\right)_\Q\,,\\
&= \left(\ker\delta\right)_\Q\,.
\end{split}
\end{equation}
\lemref{lem:quotient_kerd} allows to conclude.
\end{proof}

Let $\mathcal B = (\eta_1, \dots, \eta_s, F_b, O)$ be the basis of $H_2(S)$ obtained from \algoref{alg:homology} and let 
\begin{equation}
\mathcal B' = (\Gamma_1, \dots, \Gamma_k, \Theta_1, \dots, \Theta_{s-k}, F_b, O)
\end{equation}
be a basis of $\Prim(S/\Proj^1)$, such that for each $i$, $\Gamma_i \in \T\cap\ker\delta$ is an extension and $\Theta_i \in \ker\delta_c$ for some $c\in \Sigma$ is a fibre component.
Over $\Q$, $\mathcal B'$ is also a basis of $H_2(S)_\Q$, and the matrix of change of basis $M_{\mathcal B'\to \mathcal B}\in \GL(\Q)$ has integer coefficients and can be computed with \lemref{lem:inclusion_extensions} and \lemref{lem:inclusion_singular_components}.

Using the methods of \textcite[\S3.7]{LairezPichonVanhove23}, we may numerically compute the periods $\int_{\Gamma_i}\omega$.
Assume $\omega$ is a holomorphic form\footnote{This assumption may be dropped in general. Indeed $\mathrm dt$ restricts to $0$ on fibres and $\omega_t$ on sections, so that $\omega$ evaluates to $0$ whenever integrated on these cycles.}. Then, as the integration cycles of the periods $\int_{\Theta_i}\omega$, $\int_{F_b}\omega$ and $\int_{O}\omega$ are algebraic, the periods are zero by Lefschetz's $(1,1)$ theorem (see \thmref{thm:lefschetz11} below).
Let 
$\pi_{\mathcal B} = \left(\int_\gamma \omega \right)_{\gamma\in\mathcal B}$
be the vector of periods of $\omega$ on the basis $\mathcal B$, and similarly for $\mathcal B'$.
Then
\begin{equation}
\pi_{\mathcal B'} = \left(\int_{\Gamma_1}\omega, \dots, \int_{\Gamma_t}\omega,0,\dots, 0\right)
\end{equation}
and
\begin{equation}
\pi_{\mathcal B} = M_{\mathcal B'\to \mathcal B}^{-1}\pi_{\mathcal B'}\,.
\end{equation}

\section{Application: Mordell--Weil group and lattice}
In this section, we explain how to compute explicit embeddings of the Néron--Severi lattice in the description of $H_2(S)$ given in the previous section.
We then use this to recover the Mordell--Weil group, and the lattice structure of its torsion-free part, the Mordell--Weil lattice.

We start by recalling generalities about these lattices.

\begin{definition}
The {\em Néron--Severi lattice} $\NS(S)$ is the sublattice of $H_2(S)$ generated by classes of divisors. Its rank is called the {\em Picard rank} or {\em Picard number} or {\em Néron--Severi rank}.
\end{definition}

\begin{definition}\label{def:trivial}
The {\em trivial lattice} $\Triv(S/\Proj^1)$ is the sublattice of $\NS(S)$ generated by the zero section and the fibre components.
Its orthogonal complement is the {\em essential lattice} $L(S/\Proj^1) = \Triv(S/\Proj^1)^\perp$.
\end{definition}

\begin{definition}
The {\em Mordell--Weil group} $E(\C(t))$ of the elliptic curve $E/\C(t)$ is the group of its $\C(t)$-rational points.
\end{definition}

As mentioned in the beginning of \secref{sec:elliptic_surfaces}, rational points of $E(\C(t))$ are in bijection with sections of $S\to \Proj^1$, sending a point $P\in E(\C(t))$ to its Zariski closure $\bar P$.
The following lemma shows that the group structure of $E(\C(t))$ coincides with the lattice structure of $H_2(S)$ modulo the trivial lattice.

\begin{theorem}[{\textcite[Thm 6.5]{SchuttShioda10}}]
The map $P\mapsto \bar P \mod \Triv(S/\Proj^1)$ is an isomorphism from $E(\C(t))$ to $\NS(S)/\Triv(S/\Proj^1)$.
\end{theorem}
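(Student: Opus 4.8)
The statement is a classical theorem of Shioda–Tate, and I would prove it by exhibiting the inverse map and checking both compatibility with the group structures and bijectivity. The plan is to first recall that a $\C(t)$-point $P$ of the generic fibre $E$ determines a section $\bar P$ of $S\to\Proj^1$, hence a curve on $S$, hence a class $[\bar P]\in\NS(S)$; composing with the projection $\NS(S)\to\NS(S)/\Triv(S)$ gives the candidate map $\psi\colon P\mapsto [\bar P]\bmod\Triv(S)$. The first real step is to show $\psi$ is a group homomorphism. Here the key point is that addition on $E(\C(t))$ is characterised, via the theory of elliptic curves, by the relation that $P+Q+R=O$ iff $\bar P+\bar Q+\bar R$ is linearly equivalent to a sum of fibre components and a multiple of a fixed fibre plus $3O$; equivalently, the difference $[\bar P]+[\bar Q]-2[O]-(\text{something in }\Triv)$ is controlled by intersection with $F_b$ and with $O$. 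Concretely, for any section $\bar P$ one has $\bar P\cdot F_b=1$ and $\bar P\cdot\bar P=-\chi(\mathcal O_S)$ by adjunction, and the divisor $D_P\eqdef \bar P-\bar O-(\bar P\cdot\bar O+\chi(\mathcal O_S))F_b$ lies in $L(S)=\Triv(S)^\perp$; one checks $P\mapsto D_P\bmod\Triv(S)$ and $P\mapsto[\bar P]\bmod\Triv(S)$ differ by an element of $\Triv(S)/\Triv(S)=0$, and that $D_{P+Q}\equiv D_P+D_Q$ modulo $\Triv(S)$ because both sides restrict to the same divisor class on the generic fibre (the generic fibre of $D_{P+Q}-D_P-D_Q$ is $P+Q-P-Q-O=-O$ which is trivial after the normalisation, so the difference is supported on fibres, i.e.\ lies in $\Triv(S)_\Q$, and is integral).

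**Injectivity.** Next I would show $\ker\psi$ is trivial. If $[\bar P]\in\Triv(S)$, then $\bar P$ is linearly equivalent to a $\Z$-combination of $O$ and fibre components. Restricting this linear equivalence to the generic fibre $E$: fibre components other than the zero component restrict to $0$, $O$ restricts to the point $O\in E(\C(t))$, and a full fibre $F_b$ restricts to $0$, so $P$ is linearly equivalent on $E$ to $n\cdot O$ for some $n\in\Z$; comparing degrees forces $n=1$, whence $P\sim O$ as divisors on the curve $E$, and since $E$ has genus $1$ this means $P=O$ in the group $E(\C(t))$.

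**Surjectivity.** This is the step I expect to be the main obstacle, and it is really the content of the Shioda–Tate formula. Given a class $\alpha\in\NS(S)$, I want to produce a section realising its image in $\NS(S)/\Triv(S)$. The argument is: every divisor class on $S$ restricts to a divisor class on the generic fibre $E$; the restriction map $\NS(S)\to\operatorname{Pic}(E)$ is surjective because any $\C(t)$-rational divisor on $E$ spreads out to a horizontal divisor on $S$ (take its closure), and $\operatorname{Pic}^0(E)=E(\C(t))$ via $Q\mapsto[Q]-[O]$. Thus given $\alpha$, subtract $(\alpha\cdot F_b)[O]$ so that the restriction lands in $\operatorname{Pic}^0(E)$, obtain a point $P\in E(\C(t))$ with $[\bar P]-[\bar O]$ restricting to the same class; then $\alpha-(\alpha\cdot F_b)[O]-([\bar P]-[\bar O])$ restricts to $0$ on the generic fibre, hence (being $\C(t)$-trivial on $E$) is supported on finitely many fibres, i.e.\ lies in the span of fibre components and full fibres, which is contained in $\Triv(S)$. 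Therefore $\alpha\equiv[\bar P]\bmod\Triv(S)$, so $\psi(P)=\alpha\bmod\Triv(S)$.

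**Remark on what to cite.** Since the statement is quoted from \textcite[Thm 6.5]{SchuttShioda10}, in the paper itself it suffices to reference that source; the above is the shape of the argument one finds there (originating with \textcite{Shioda1990}), the only slightly technical points being the normalisation constants $\chi(\mathcal O_S)$ and $\alpha\cdot F_b$ needed to reduce to $\operatorname{Pic}^0$ of the generic fibre, and the verification that ``supported on fibres'' is exactly $\Triv(S)_\Q$ — which for an elliptic surface with section follows because the fibre components together with $O$ generate, over $\Q$, the span of all vertical classes plus the class of $O$.
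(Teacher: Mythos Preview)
The paper does not supply a proof of this theorem at all: it is stated as a citation of \textcite[Thm 6.5]{SchuttShioda10} and immediately used, with no argument given in the text. So there is nothing in the paper to compare your proposal against.

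Your sketch is correct and is essentially the standard proof one finds in the cited reference (and in Shioda's original paper): one checks that $P\mapsto[\bar P]\bmod\Triv(S)$ is a homomorphism by restricting to the generic fibre, proves injectivity by observing that a section whose class lies in $\Triv(S)$ restricts to a divisor on $E$ linearly equivalent to $O$, and proves surjectivity by restricting an arbitrary class to $\operatorname{Pic}(E)$, correcting by a multiple of $[O]$ to land in $\operatorname{Pic}^0(E)\simeq E(\C(t))$, and noting that the remaining vertical part lies in $\Triv(S)$. One small point worth tightening: in the surjectivity step you want the \emph{integral} statement that a divisor class trivial on the generic fibre is an integral combination of fibre components (not merely rational); this is Zariski's lemma / the structure of vertical divisors on a fibred surface, and it is what guarantees the difference lands in $\Triv(S)$ rather than only in $\Triv(S)_\Q$. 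With that caveat, your argument is complete, and your closing remark is exactly right: in the context of this paper a bare citation is what is called for.
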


In particular, this equips the torsion-free part of the Mordell--Weil group with a lattice structure, inherited from the lattice structure on $\NS(S)\subset H_2(S)$.
More precisely, the orthogonal projection of $\NS(S)_\Q = \NS(S) \otimes \Q$ onto $L(S/\Proj^1)_\Q = L(S/\Proj^1) \otimes \Q$ defines a map ${\phi\colon  E(\C(t))\to L(S/\Proj^1)_\Q}$.
The kernel of this map is the torsion subgroup, and thus $\phi$ equips $E(\C(t))/E(\C(t))_{tor} $ with a rational lattice structure.

\begin{definition}
The {\em Mordell--Weil lattice} $\operatorname{MWL}(S)$ of $S/\Proj^1$ is the resulting lattice $-E(\C(t))/E(\C(t))_{tor}$. 
\end{definition}

\begin{remark}
The minus sign means that the bilinear pairing is the opposite of the one induced naturally. 
This is chosen so that $\operatorname{MWL}(S)$ is a positive-definite lattice, see \textcite[\S11]{SchuttShioda10}.
\end{remark}

For further reading on this topic, we recommend \textcite{SchuttShioda10}.

\subsection{Computing the Néron--Severi lattice}\label{sec:periods}
The first step toward computing the Mordell--Weil lattice is to compute the Néron--Severi lattice $\NS(S)$.
By Lefschetz's $(1,1)$ theorem \textcite[\S1.2]{GriffithsHarris78}, it is entirely characterised as the kernel of the holomorphic period mapping.
\begin{theorem}[Lefschetz (1,1) theorem]\label{thm:lefschetz11}\label{thm:lefschetz11}
Let $\omega_1, \dots, \omega_s$ be a basis of the space of holomorphic $2$-forms of $S$, $H^{2,0}(S)$, and consider the period map
$\pi\colon  H_2(S)\to \C^s, \gamma\mapsto (\int_\gamma\omega_1, \dots, \int_\gamma\omega_s)$. Then
\begin{equation}
\NS(S) = \ker\pi\,.
\end{equation}
\end{theorem}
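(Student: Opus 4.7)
The plan is to reduce the statement to the classical Lefschetz $(1,1)$ theorem on the cohomology side, via Poincaré duality. First, I would use Poincaré duality on the compact complex surface $S$ to identify $H_2(S,\Z) \simeq H^2(S,\Z)$, under which integration of a form $\omega$ over a cycle $\gamma$ corresponds to the cup product pairing $\int_S [\gamma]^{\vee}\wedge \omega$, where $[\gamma]^\vee\in H^2(S,\C)$ is the Poincaré dual of $\gamma$. Under this identification, $\gamma\in \ker\pi$ iff $[\gamma]^\vee$ pairs to zero against every element of $H^{2,0}(S)$.

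Next, I would invoke the Hodge decomposition $H^2(S,\C) = H^{2,0}\oplus H^{1,1}\oplus H^{0,2}$ and the fact that, under the intersection pairing on the surface, $H^{p,q}$ pairs nontrivially only with $H^{2-p,2-q}$. This shows that the condition of pairing trivially with all of $H^{2,0}$ is equivalent to the $(0,2)$-component of $[\gamma]^\vee$ being zero. Because $\gamma$ is a real cycle, $[\gamma]^\vee$ is a real class, so its $(2,0)$ and $(0,2)$ parts are complex conjugates and vanish together. Therefore $[\gamma]^\vee\in H^{1,1}(S)$, and since $[\gamma]^\vee$ is an integral class, $[\gamma]^\vee \in H^{1,1}(S)\cap H^2(S,\Z)$. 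This inclusion is reversible, so $\ker\pi$ corresponds exactly to the integral $(1,1)$-classes.

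Finally, I would appeal to the classical Lefschetz $(1,1)$ theorem in its cohomological form: the image of the cycle class map $\mathrm{Pic}(S)\to H^2(S,\Z)$ is exactly $H^{1,1}(S)\cap H^2(S,\Z)$. Composing with Poincaré duality, the image corresponds to the subgroup of $H_2(S,\Z)$ generated by classes of divisors, which is by definition $\NS(S)$. This yields the claimed equality $\NS(S) = \ker \pi$.

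The real work is concentrated in the classical Lefschetz $(1,1)$ theorem, whose standard proof rests on the exponential sheaf sequence $0\to \Z\to \mathcal{O}_S\to \mathcal{O}_S^*\to 0$, the induced long exact sequence in cohomology, and Hodge theory to identify $H^1(S,\mathcal{O}_S^*)$ with the Picard group and to control the image in $H^2(S,\Z)$. I would cite this as a black box from \textcite[\S1.2]{GriffithsHarris78}; the remaining steps above are then essentially a bookkeeping translation via Poincaré duality and the Hodge type considerations.
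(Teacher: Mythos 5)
Your proof is correct and follows the standard route: the paper itself does not give a proof but cites \textcite[\S1.2]{GriffithsHarris78} for the classical cohomological Lefschetz $(1,1)$ theorem, and your argument is precisely the bookkeeping translation of that result to the homological statement via Poincaré duality, the Hodge decomposition, and the type considerations for the intersection pairing. The one implicit hypothesis worth flagging is that $S$ is projective (hence Kähler), so that the Hodge decomposition on $H^2(S,\C)$ is available; this holds for elliptic surfaces over $\Proj^1$ with a section.
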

We compute this kernel heuristically using the LLL method. 
In order to do this we need two things: a basis of $H^{2,0}(S)$ and numerical approximations of the associated periods.

Let $\omega$ be a holomorphic $2$-form on $S$. 
As an element of $H^0(S, \Omega_S^2)$, it can be written as 
\begin{equation}
\omega = f(t) \omega_t \wedge \ud t\,,
\end{equation}
where $\omega_t \in H^1(E)$ is a rational section of the holomorphic $1$-form bundle of the generic fibre, and $f\in \Q(t)$ is a rational function.
This representation is well adapted to the integration algorithm of \textcite[\S3.7]{LairezPichonVanhove23}.
Of course the converse is not true: not every rational function will yield a holomorphic $2$-form on $S$. 
In fact the rational functions for which this is true are very tightly controlled by the \emph{Picard--Fuchs equation} $\Lambda$ of $\omega_t$ --
that is, the minimal differential equation satisfied by $\omega_t$ with respect to the connection inherited from the derivation on $\C(t)$ through the fibration over $\Proj^1$.

Before focusing on the rational functions, let us briefly recall how $\omega_t$ and its Picard--Fuchs equation can be computed. 
Let $P_t$ be the defining equation of the elliptic fibration, given as a polynomial in $\Q[t, X,Y,Z]$ that is homogeneous of degree $3$ in $X,Y,Z$.
In particular, the defining equation for $F_a$ is $P_a$ whenever $a$ is regular.
\begin{proposition}
There is a natural isomorphism $\res\colon  H^2(\Proj^2\setminus F_a)\to H^1(F_a)$, called the {\em residue mapping}.
\end{proposition}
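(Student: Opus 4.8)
The plan is to deduce the statement from the Gysin (or ``residue'') long exact sequence attached to the smooth closed hypersurface $F_a\subset\Proj^2$. For a regular value $a$ the fibre $F_a=\{P_a=0\}$ is a smooth plane cubic, hence a compact Riemann surface of genus~$1$, and $U:=\Proj^2\setminus F_a$ is a smooth affine surface. A tubular neighbourhood of $F_a$ in $\Proj^2$ together with the Thom isomorphism identifies $H^k(\Proj^2,U)\cong H^{k-2}(F_a)$; substituting this into the long exact sequence of the pair $(\Proj^2,U)$ gives
\begin{equation}
\cdots \to H^{k-2}(F_a)\xrightarrow{\ \gamma_!\ } H^k(\Proj^2)\xrightarrow{\ j^*\ } H^k(U)\xrightarrow{\ \res\ } H^{k-1}(F_a)\xrightarrow{\ \gamma_!\ } H^{k+1}(\Proj^2)\to\cdots\,,
\end{equation}
where $j:U\hookrightarrow\Proj^2$ is the open inclusion, $\gamma_!$ is the Gysin pushforward along $F_a\hookrightarrow\Proj^2$, and $\res$ is the residue. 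Coefficients are taken in $\C$ (equivalently $\Q$); the same sequence exists in algebraic de Rham cohomology, where $\res$ is the Leray--Poincaré residue of a meromorphic $2$-form with at worst logarithmic poles along $F_a$ — this is the description actually used in the period computations below.

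First I would specialise the sequence to $k=2$:
\begin{equation}
H^0(F_a)\xrightarrow{\ \gamma_!\ } H^2(\Proj^2)\xrightarrow{\ j^*\ } H^2(U)\xrightarrow{\ \res\ } H^1(F_a)\xrightarrow{\ \gamma_!\ } H^3(\Proj^2)\,.
\end{equation}
Then I would evaluate the two outer terms. Since $\Proj^2$ has cohomology concentrated in even degrees, $H^3(\Proj^2)=0$, so $\res$ is surjective. On the other side $F_a$ is connected, so $H^0(F_a)\cong\C$, while $H^2(\Proj^2)\cong\C$ is generated by the hyperplane class $h$; the Gysin map sends the canonical generator of $H^0(F_a)$ to the fundamental class $[F_a]=3h$, i.e.\ it is multiplication by the nonzero scalar $3$, hence an isomorphism. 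By exactness at $H^2(\Proj^2)$ this forces $j^*=0$, and exactness at $H^2(U)$ then gives $\ker\res=\im j^*=0$, so $\res$ is injective. Combining the two, $\res:H^2(\Proj^2\setminus F_a)\to H^1(F_a)$ is an isomorphism, and naturality is inherited from the functoriality of the exact sequence above.

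The argument is entirely standard Griffiths residue theory and presents no real obstacle; the only points needing a moment's care are that one works over $\C$ (or $\Q$) so that ``multiplication by~$3$'' is invertible — over $\Z$ one would get injectivity only, with cokernel annihilated by $3$ — and that for a curve in $\Proj^2$ the target $H^1(F_a)$ already equals the primitive cohomology $H^1_{\mathrm{prim}}(F_a)$ (because $H^1(\Proj^2)=0$), so no projection onto a primitive part is required, in contrast to the higher-dimensional case.
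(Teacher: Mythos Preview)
Your argument via the Gysin sequence is correct and is exactly the standard proof. The paper, however, does not actually prove this proposition: it is stated as a known fact and used immediately, with the underlying reference being Griffiths's residue theory (cited a few lines later as \parencite{Griffiths1969a}). So there is no ``paper's own proof'' to compare against; your write-up simply fills in the omitted justification in the expected way.

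One small slip in your parenthetical remark about integer coefficients: over $\Z$ the residue map $\res:H^2(U;\Z)\to H^1(F_a;\Z)$ is still \emph{surjective} (because $H^3(\Proj^2;\Z)=0$), but it acquires a nontrivial kernel isomorphic to $\Z/3\Z$, namely the image of $j^*$ coming from $\coker\bigl(\times 3:\Z\to\Z\bigr)$. So the correct statement is ``surjective with $3$-torsion kernel'', not ``injective with cokernel annihilated by $3$''. This does not affect your main argument, which is carried out over $\C$.
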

Since $\Proj^2\setminus F_a$ is affine, its De Rham cohomology can be computed using algebraic forms directly \textcite{Grothendieck1966}.
More precisely, it is the quotient of the space of homogeneous rational fractions of the form $\frac{A}{P_a^k}$ with degree $-3$ by derivatives of the same form.
Furthermore, the holomorphic form of $H^1(F_a)$ is identified with the residue of the image of $\frac{1}{P_a}$ in this description \parencite[(8.6)]{Griffiths1969a}.
All in all, $\omega_t$ can be taken to be $\res \frac{1}{P_t}$.
Its Picard--Fuchs equation can then be computed using Griffiths--Dwork reduction \parencite[\S 5.3]{CoxKatz1999}.
For a more detailed discussion, see \textcite[\S3.2]{LairezPichonVanhove23}.

We now turn back to the computation of valid rational coefficients.
Let us first recall some standard definitions.
\begin{definition}
Let $y_1, y_2$ be a basis of solutions of a second order differential equation $\Lambda = a(t)\partial_t^2 + b(t)\partial_t + c(t)$.
Then \emph{the Wronskian} $W$ of $\Lambda$ is $W(t) = y_1(t)y_2'(t) - y_2(t)y_1'(t)$.
It is the solution to the differential equation $a(t)\partial_t - b(t)$ and is defined up to a scalar.
When $\Lambda$ is the Picard--Fuchs equation of an elliptic surface, we have $W(t) \in \Q(t)$ \parencite[Theorem II.2.5]{Stiller_1981}.
\end{definition}
\begin{definition}
Let $\frak{A}$ be a divisor on $\Proj^1$. The \emph{linear system} $\mathcal{L}(\frak{A})$ associated to $\frak{A}$ is the space of rational functions on $\Proj^1$ for which the pole order at a point $p\in \Proj^1$ is at most $\operatorname{ord}_p\frak{A}$.
\end{definition}
The following result of \textcite{Stiller87} gives a way to compute rational functions $f_1, \dots, f_r\in \Q(t)$ such that:
\begin{itemize}
\item $f_i(t)\omega_t\wedge \ud t$ defines a holomorphic $2$-form on $S$;
\item and $f_1(t)\omega_t\wedge \ud t, \dots, f_r(t)\omega_t\wedge \ud t$ is a basis of $H^{2,0}(S)$.
\end{itemize}
\begin{theorem}[{\textcite[\S3]{Stiller87}}]\label{thm:holomorphic_forms}
There is a divisor $\frak{A}_0$ on $\Proj^1$ depending only on the Picard--Fuchs equation $\Lambda$ such that if $Z\in\frak{A}_0$, then $\frac{Z}{W}\omega_t\wedge \ud t$ is a holomorphic $2$-form on $S$, where $W\in\Q(t)$ is the Wronskian of $\Lambda$. 
Furthermore the map $\mathcal{L}(\frak{A}_0) \to H^{2,0}(S)$ is an isomorphism.
\end{theorem}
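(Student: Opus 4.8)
The plan is to interpret holomorphic $2$-forms on $S$ in terms of the fibration $f: S \to \Proj^1$ and reduce the statement to a question about sections of a line bundle on $\Proj^1$ controlled by the Picard--Fuchs equation $\Lambda$. First I would set up the relative picture: away from the singular fibres, $\omega_t = \res \frac{1}{P_t}$ trivialises the relative dualising sheaf $f_* \omega_{S/\Proj^1}$ on the regular locus, so any holomorphic $2$-form can be written $\frac{Z}{W}\omega_t \wedge \ud t$ with $Z$ a rational function and $W$ the Wronskian of $\Lambda$ (the Wronskian is the natural denominator that appears when one expresses $\omega_t$ and its derivatives in a flat frame). The content of the theorem is then entirely local at the points of $\Sigma$ and at $\infty$: for each such point one must determine exactly which order of vanishing or pole of $Z$ is forced so that $\frac{Z}{W}\omega_t\wedge \ud t$ extends holomorphically across the singular fibre. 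This defines the divisor $\frak A_0$ as a sum of local contributions $\sum_{c} a_c \cdot [c]$, where each $a_c$ depends only on the local monodromy / local exponents of $\Lambda$ at $c$, hence only on $\Lambda$.

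The key steps, in order: (1) On $\Proj^1 \setminus \Sigma$, establish the isomorphism between $H^0(S, \Omega_S^2)$ restricted to $f^{-1}(\Proj^1 \setminus \Sigma)$ and meromorphic functions $Z$ via $Z \mapsto \frac{Z}{W}\omega_t \wedge \ud t$, using that $\omega_t$ generates $H^{1,0}$ of each smooth fibre and that $\ud t$ generates $\Omega^1_{\Proj^1}$ there. (2) Compute, for each singular fibre type, the local behaviour of $\omega_t$ and of $W$ near $c$ from the Picard--Fuchs equation — concretely, the local exponents of $\Lambda$ at $c$ and the valuation of its leading coefficient — and translate "$\frac{Z}{W}\omega_t\wedge \ud t$ is holomorphic near $F_c$" into a bound $v_c(Z) \geq a_c$. (3) Do the same at $\infty$ (a regular point of the fibration but a point of $\Proj^1$, where $\ud t$ has a pole of order $2$), obtaining the contribution $a_\infty$. (4) Assemble $\frak A_0 = \sum_c a_c[c] + a_\infty[\infty]$ and observe that $Z \in L(\frak A_0)$ is exactly the condition that $\frac{Z}{W}\omega_t\wedge \ud t$ be a global holomorphic $2$-form, giving an injective map $L(\frak A_0) \to H^{2,0}(S)$. (5) Prove surjectivity: any holomorphic $2$-form restricts on the regular locus to some $\frac{Z}{W}\omega_t\wedge \ud t$ by step (1), and holomorphicity across the singular and infinite fibres forces $Z \in L(\frak A_0)$ by steps (2)--(4); hence the map is an isomorphism.

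The main obstacle is step (2): carrying out the local analysis at each Kodaira fibre type and matching the vanishing order imposed by holomorphicity of the $2$-form on $S$ with the vanishing order read off from the Picard--Fuchs equation. This requires relating the analytic local structure of the singular fibre (the Kodaira type, via \tabref{tab:KodairaClassification}) to the regular-singular data of $\Lambda$ at $c$ — the local exponents and the valuation of the leading coefficient — and checking that the resulting $a_c$ indeed depends only on $\Lambda$ and not on the chosen model $P_t$. One expects this to be where \textcite{Stiller87} does the real work, presumably via a case-by-case inspection of minimal Weierstrass models or, more efficiently, by invoking the known relation between the degree of $\frak A_0$, the arithmetic genus of $S$, and the discriminant/conductor data of $\Lambda$; the surjectivity in step (5) should then be essentially a dimension count matching $\dim L(\frak A_0)$ with $\dim H^{2,0}(S) = p_g(S)$.
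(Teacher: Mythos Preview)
The paper does not prove this theorem: it is stated as a citation to \textcite[\S3]{Stiller87}, and the only material the paper adds after the statement is the explicit recipe for $\operatorname{ord}_p\mathfrak{A}_0$ in terms of the local exponents of $\Lambda$ at each (singular) point, recalled ``for the sake of completeness''. So there is no proof in the paper to compare your proposal against.

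That said, your outline is a faithful reconstruction of the strategy behind Stiller's result, and it matches the paper's recollection in spirit: the divisor $\mathfrak{A}_0$ is built point by point from the local exponents of $\Lambda$ (your step~(2)), with a separate contribution at $\infty$ (your step~(3)); the paper's formula $\operatorname{ord}_p\mathfrak{A}_0 = -\lfloor s\rfloor + 1$ (resp.\ $-\lfloor s\rfloor - 3$ at $\infty$) is exactly the output of that local analysis, where $s$ is the larger of the two local exponents. Your identification of step~(2) as the crux, and of the surjectivity as essentially a dimension count against $p_g(S)$, is correct. One small point: your proposal frames the local condition as ``holomorphic extension of the $2$-form across the singular fibre of the minimal model $S$'', which is indeed the right criterion, but in practice Stiller reads it off directly from the Frobenius solutions of $\Lambda$ rather than going case-by-case through Kodaira types; the paper's recalled formula reflects this more uniform approach.
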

%

An algorithm for computing $\frak{A}_0$ is given in \textcite[\S3]{Stiller87}, and we recall it here for the sake of completeness.
Assume that the Picard--Fuchs equation $\Lambda$ has order two\footnote{This may fail when $S$ is an isotrivial elliptic surface in which case the Picard--Fuchs equation has order $1$. In that case, the author does not know of a way to identify the holomorphic form.}, or in other words that $\omega_t$ and its derivative generate the full space of $1$-forms of the underlying elliptic curve.
In particular at any point $p\in\Proj^1$, the space of local solutions in a slit neighbourhood of $p$ is generated by two solutions, that are locally of the form 
\begin{equation}
(t-p)^{q}(h_1(t-p)\log(t-p) +h_2(t-p))\,,
\end{equation}
with $q\in \Q$, $h_1$ and $h_2$ two holomorphic functions in a neighbourhood of $0$. Let $r\le s$ be the respective leading exponents of these two solutions.
Then the order of $\mathfrak{A}_0$ at $p$ is 
\begin{equation}
	\operatorname{ord}_p \mathfrak{A}_0 = \begin{cases} 
	-\lfloor s\rfloor -3\text{ if }p=\infty\\
	-\lfloor s\rfloor +1 \text{ otherwise}
	\end{cases}
\end{equation}
In particular, if $p$ is a finite regular point of $\Lambda$, then we obtain $\operatorname{ord}_p \mathfrak{A}_0 = 0$, meaning we can consider solely the singular points of $\Lambda$.
For further reading on the topic, we recommend \textcite{Stiller87} and \textcite[\S4.3]{DoranKostiuk2019}.

The local exponents of $\Lambda$ can be obtained symbolically (see \textcite{Frobenius1873} and \textcite[\S4]{Mezzarobba2010} for instance), and we thus have a method to compute a basis of the holomorphic $2$-forms of $S$, with a presentation that is well suited for the integration methods of the periods mentioned in \secref{sec:homology_and_periods}.
We can thus compute high precision numerical approximations of the holomorphic periods of $S$.
The Néron--Severi group can be heuristically computed by recovering integer linear relations between these periods. Indeed, let $\alpha_i$ be integers. Then
\begin{equation}
	\int_{\sum_i\alpha_i\gamma_i}\omega =\sum_i\alpha_i\left(\int_{\gamma_i}\omega\right) =0 \text{ for all }\omega\in H^{2,0}(S)
\end{equation}
if and only if the cycle $\sum_i\alpha_i\gamma_i\in \NS(S)$, where the $\gamma_i$'s form a basis of $H_2(S)$.
Thus integer linear relations between the holomorphic period vectors $(\int_{\gamma_i}\omega_1, \dots, \int_{\gamma_i}\omega_s)$ are in bijection with $\NS(S)$.

In order to recover these linear relations, we use the LLL algorithm. 
This computation is not certified, and may fail in two ways: the algorithm may miss integer relations with large coefficients, or may recover ``fake'' linear relations that hold up to very high precision. 
More precisely, the algorithm provides a sublattice $\Lambda\subset H_2(X)$ and positive numbers $B$, $N$ and $\varepsilon$ that depend on precision such that
\begin{enumerate}
\item $\Lambda = \NS(X)$; or
\item $\NS(X)$ is not generated by elements of the form $\sum_{i}\alpha_i\gamma_i$ with $ \sum_i \alpha_i^2 \le B$; or
\item There exists $\sum_{i}\alpha_i\gamma_i \notin \NS(X)$ such that 
\begin{equation}
    \sum_j \left|\sum_i\alpha_i\int_{\gamma_i}\omega_j\right|^2 \le \varepsilon^2 \hspace{2em} \text{and} \hspace{2em}  \sum_i\alpha_i^2 \le N^2\,.
\end{equation}
\end{enumerate}
In practice, for $300$ recovered decimal digits of precision for the periods of an elliptic $K3$ surface (which was obtained in a few seconds in all the cases that we tried), we find $B \simeq 10^{132}$, $N=3$, and $\varepsilon \simeq 10^{-271}$.
For further discussion on these issues, see \textcite{LairezSertozPicardRank}.

\subsection{Example: an elliptic curve with high Mordell--Weil rank over $\Q$}
In this section we detail the workings of our algorithm on an explicit example.
A SageMath worksheet reproducing the results mentioned here is available at \mbox{\emph{example\_paper\_elliptic.ipynb}}\footnote{\url{https://nbviewer.org/urls/gitlab.inria.fr/epichonp/eplt-support/-/raw/main/example_paper_elliptic.ipynb}}.
The elliptic surface $S/\Proj^1$ we consider is an elliptic K3 with Picard rank 19 used in \textcite[\S9]{ElkiesKlagsbrun2020} (with $u=5$ in their notations) to find the elliptic curve with highest known Mordell--Weil rank over $\Q$ for which the Mordell--Weil torsion subgroup is $\Z/2\Z$.
Its defining equation is
\begin{equation}
X^{3} + 4A(t)X^{2} Z  + 512 B(t) X Z^{2}-Y^{2} Z
\end{equation}
where
\begin{equation}
A(t) = 93273 t^{4} + 58840 t^{3} + 102618 t^{2} + 35680 t + 14485
\end{equation}
and
\begin{equation}
\begin{split}
B(t) = -8590032 t^{8} - 78412620 t^{7} + 17011856 t^{6}
+241822775 t^{5} - 19459741 t^{4} - 127136490 t^{3} \\
+16161642 t^{2} + 15406335 t - 2083725
\end{split}
\end{equation}

\subsubsection*{The homology lattice of $S$}

This elliptic fibration has $16$ singular fibres above points $c_1, \dots, c_{16}$. 
We pick a basepoint $b$ as well as a distinguished basis $\ell_1,\dots, \ell_{16}$ of ${\pi_1(\C^1\setminus\{c_1, \dots, c_{16}\}, b)}$.
The corresponding monodromy matrices in a chosen symplectic basis~$\gamma_1, \gamma_2$ of the homology of the fibre are given by
\begin{equation}
\begin{gathered}
M_1 = 
\left(\begin{array}{rr}
7 & 9 \\
-4 & -5
\end{array}\right)\,,\hspace{1em}
M_i = 
\left(\begin{array}{rr}
1 & 1 \\
0 & 1
\end{array}\right)
\text{ for $i = 2,3,15$}\,\\
M_i =
\left(\begin{array}{rr}
3 & 1 \\
-4 & -1
\end{array}\right)
\text{ for $i = 4,9,11,12$, and }\\
M_i=
\left(\begin{array}{rr}
3 & 2 \\
-2 & -1
\end{array}\right)
\text{ for $i = 5,6,7,8,10,13,14,16$.}
\end{gathered}
\end{equation}

Computing the $\SL_2(\Z)$ conjugacy class of these matrices, one finds that eight fibres (those for which the monodromy matrix is $M_5$) are $I_2$ fibres, and the remaining eight are Lefschetz, i.e., of type $I_1$.
Indeed, we have 
\begin{equation}
M_5 = AM_{I_2}A^{-1} \text{ with } A = 
\left(\begin{array}{rr}
1 & 0 \\
1 & 1
\end{array}\right)
\in\GL_2(\Z)\,.
\end{equation}
Per the minimal normal factorisation of \tabref{tab:KodairaClassification}, $M_{I_2} = U^2$.
Therefore in a morsification $S'/\Proj^1$ of $S/\Proj^1$, there is a distinguished basis $\ell'_1,\dots, \ell'_{24}$ of $\pi_1(\C \setminus\Sigma', b)$ consisting of $8+8\times 2 = 24$ elements, where $\Sigma'$ is the set of critical values of the morsification, and such that the associated monodromy matrices are given by
\begin{equation}
\begin{gathered}
M'_1 = 
\left(\begin{array}{rr}
7 & 9 \\
-4 & -5
\end{array}\right),\hspace{1em}
M'_i = 
\left(\begin{array}{rr}
1 & 1 \\
0 & 1
\end{array}\right)
\text{ for $i = 2,3,22$},\\
M'_i =
\left(\begin{array}{rr}
3 & 1 \\
-4 & -1
\end{array}\right)
\text{ for $i = 4,13,16,17$, and }\\
M'_i=
\left(\begin{array}{rr}
2 & 1 \\
-1 & 0
\end{array}\right) =
AUA^{-1}
\text{ for all other $i$.}
\end{gathered}
\end{equation}

We may then use the methods of \textcite[\S\S3,5]{LairezPichonVanhove23} to compute an effective basis $\Gamma_1, \dots, \Gamma_{22}$ of the homology of $S'$ in terms of the thimbles $\Delta'_1, \dots, \Delta'_{24}$, the fibre class and the zero section.
For instance, we find that a non trivial homology class is given by $\Delta'_2 - \Delta'_{22}$:
as~$M'_2 = M'_{22}$, this relative homology class has empty boundary and thus lifts to a class in $H_2(S')$. 
It is non-trivial as $\ell'_{22}{\ell'}^{-1}_{2}$ is non-trivial in $\pi_1(\Proj^1\setminus \Sigma')$.

With this description, the components of an $I_2$ fibre of $S$ above a critical value $c$ can be obtained as the kernel of the thimbles of critical points flowing together at $c$.
More explicitly, the component of the fibre above $c_5$ is the homology class corresponding to the lift of $\Delta'_5-\Delta'_6$.

Furthermore, extensions of $S$ can also be described in this basis.
For example
\begin{equation}
\tau_{\ell_6^{-1}\ell_5}(\gamma_2) = \tau_{\ell_8^{-1}\ell_7^{-1}\ell_6\ell_5}(\gamma_2) = \Delta_5 +\Delta_6 - \Delta_7 - \Delta_8\,.
\end{equation}
All in all we obtain the coordinates of a basis of $\Prim(S/\Proj^1)$ in the basis of $H_2(S)$ obtained from the morsification $S'$.
From the Picard--Fuchs equation of the surface, which has order $2$ and degree $26$, we recover using \thmref{thm:holomorphic_forms} that the space of holomorphic forms of $S$ is generated by
\begin{equation}
\omega = \res\frac{1}{P_t}\wedge \ud t\,.
\end{equation}
From then on, we can compute the periods on the primary lattice, and recover the full period mapping using the coordinates computed above.
For example, we find that the holomorphic period of the first element of the basis of homology we computed is 
\begin{equation}
\int_{\Gamma_1} \omega = -0.0007064447191\cdots  - i0.0002821239749\cdots \,,
\end{equation}
with certified precision bounds of around 150 digits.

Using the LLL algorithm, we find that the Néron--Severi lattice has rank $19$ as expected.
Finally the Mordell--Weil group is obtained as the quotient of the Néron--Severi lattice by the trivial lattice.
We find
\begin{equation}
\MW(S)\simeq \Z^9 \times \Z/2\Z\,.
\end{equation}

It should be noted that the result of \textcite[\S9]{ElkiesKlagsbrun2020} is stronger than what we have computed here. 
First our approach for computing the Néron--Severi group is heuristic as it relies on the LLL algorithm --- in particular it is not a certified computation, and thus does not provide a proof.
Secondly we have merely shown a result for the Mordell--Weil group over $\C(t)$\footnote{or, rather, over $\Qbar(t)$.}, whereas \textcite{ElkiesKlagsbrun2020} shows that this computation holds over $\Q$ for one of the fibres.

\section*{Acknowledgements}
I would like to thank Charles Doran, Pierre Lairez, Erik Panzer, Duco van Straten, and Pierre Vanhove for valuable and insightful discussions.
I am also thankful to the reviewers for their helpful and constructive feedback which elevated the quality of this text.

\section*{Funding}{This work has been supported by the Agence nationale de la recherche
  (ANR), grant agreement ANR-19-CE40-0018 (De Rerum Natura), grant agreement
ANR-20-CE40-0026-01 (Symmetries and moduli spaces in algebraic geometry and physics); and the European
  Research Council (ERC) under the European Union’s Horizon Europe research and
  innovation programme, grant agreement 101040794 (10000~DIGITS)}

\bibliographystyle{elsarticle-harv}
\bibliography{main}

\end{document}